\documentclass{amsart}


\usepackage{pdfsync}
\usepackage{mathpazo}
\usepackage{amscd}
\usepackage{amsmath}
\usepackage{amssymb}
\usepackage{amsthm}
\usepackage{epsf}
\usepackage{latexsym}
\usepackage{verbatim}
\usepackage{bbm}
\usepackage[utf8]{inputenc}
\usepackage{pdfsync}
\usepackage{enumerate}
\usepackage{amsmath,amsthm,mathrsfs}
\usepackage[english]{babel}
\usepackage{graphicx}
\usepackage{amssymb, xcolor}
\usepackage{epstopdf, enumerate}
\usepackage{amsfonts,amssymb,amsthm}
\usepackage{pifont, url, lscape}
\usepackage{amsfonts,amstext,amssymb,verbatim,epsfig}
\usepackage{pgf,tikz}
\usepackage{float}
\usetikzlibrary{arrows}
\usepackage{hyperref}
\usepackage{cite}
\usepackage{epstopdf}
\usepackage{color}
\usepackage{transparent}
\usepackage{subfigure}
\usepackage{xcolor}
\usepackage[normalem]{ulem}

\usepackage[left=1.1in,top=1.5in,right=1in,bottom=1in]{geometry}








\newcommand{\vol}{\text{vol}}

\newcommand{\E}{\mathbb{E}}

\newcommand{\R}{\mathbb{R}}

\newcommand{\C}{\mathbb{C}}

\newcommand{\Pa}{\mathbb{P}}

\newcommand{\n}{\mathbf{n}}
\newcommand{\tif}{\textnormal{if }}

\numberwithin{equation}{section}

\hypersetup{colorlinks=true,pdfborder=000,  citecolor  = blue, citebordercolor= {magenta}}
\hypersetup{linkcolor=purple}
\makeatletter
\let\reftagform@=\tagform@
\def\tagform@#1{\maketag@@@{(\ignorespaces\textcolor{purple}{#1}\unskip\@@italiccorr)}}
\renewcommand{\eqref}[1]{\textup{\reftagform@{\ref{#1}}}}
\makeatother
\DeclareGraphicsRule{.tif}{png}{.png}{`convert #1 `dirname #1`/`basename #1 .tif`.png}

\newcommand{\Ima}{\textnormal{Im }}

\newcommand{\proj}{\textnormal{proj}}

\theoremstyle{plain}
\newtheorem{theorem}{Theorem}[section]
\newtheorem{corollary}[theorem]{Corollary}
\newtheorem{lemma}[theorem]{Lemma}

\newenvironment{Proof of lemma}{\noindent{\bf Proof of Lemma}}{\hfill$\Box$\newline}
\newenvironment{Proof of theorem}{\noindent{\bf Proof of Theorem}}{\hfill{\footnotesize${\square}$}\newline}
\newenvironment{Proof of theorems}{\noindent{\bf Proof of Theorems}}{\hfill$\Box$\newline}
\newenvironment{Proof of proposition}{\noindent{\bf Proof of Proposition}}{\hfill$\Box$\newline}
\newenvironment{Proof of propositions}{\noindent{\bf Proof of Propositions}}{\hfill$\Box$\newline}
\newenvironment{Proof of exercise}{\noindent{\it Proof of Exercise:}}{\hfill$\Box$}
\newenvironment{Acknowledgements}{\noindent{\bf Acknowledgement}}





\begin{document}
	
	\title{On the number of equilibria with a given number of unstable directions}\author{Xavier Garcia}
	\address{Department of Mathematics\\
		Northwestern University\\
		Evanston, IL 60208 USA}
	\email{sphinx@math.northwestern.edu}
	\maketitle

	\begin{abstract}
We compute the large-dimensional asymptotics for the average number of equilibria with a fixed number of unstable directions for random Gaussian ODEs on a sphere. We also discuss the effects that the value of the Lagrange multiplier of the vector field has on the number of such equilibria. This was a problem posed by Fyodorov~\cite{Fyo2}.
\end{abstract}

\section{Introduction and main results} 

Over the years, there has been an interest in understanding the dynamics of random Gaussian ordinary differential equations (ODEs) on high dimensional spheres, starting with the work of Cugliando et al. \cite{CKL} and most recently in the work of Fyodorov \cite{Fyo1},\cite{Fyo2}. The standard setup is to consider a first order ODE \begin{equation*}
\frac{dx}{dt} = F(x)
\end{equation*}
where $F$ is a random vector field on $S^{N-1}(\sqrt{N})$ and attempt to describe the behavior of the possible solutions as $N \rightarrow \infty$. One natural starting point is to count the number $\mathcal{N}_{tot}$ of equilibrium points and to study the large-dimensional asymptotics of this quantity. This is the content of Fyodorov's work in ~\cite{Fyo2}. In this paper, we classify the equilibrium points by stability. For an equilibrium point $\sigma$ and a neighborhood $U$ around $\sigma$, we choose coordinates on $U$ and the tangent space $TU$ such that $\sigma = 0 \in \R^{N-1}$ and write $F$ locally as a function $F : \R^{N-1} \rightarrow \R^{N-1}$ denoted by $$F(x) = (c_1(x), ... \,, c_{N-1}(x)).$$ We say $\sigma$ is an equilibrium point with $m$ unstable directions if the Jacobian matrix $JF(\sigma) := (\partial_i c_j(0))$ has exactly $m$ eigenvalues with non-negative real part. While the explicit formula for $JF(\sigma)$ depends on the coordinates chosen, its eigenvalues do not. Our focus on this paper will be on the number $\mathcal{N}_{m}$ of equilibria with $m$ unstable directions and its related large-dimensional asymptotics. In the case of a gradient flow (known in the literature as relaxational dynamics), this problem has been studied in great detail, as can be found in Auffinger, Ben Arous and \v{C}ern\'y ~\cite{ABC} and Fyodorov~\cite{Fyo1}. It is the purpose of this paper to obtain the asymptotics in the non-relaxational case. We compute the exponential rate of $\E \mathcal{N}_{m}$ under very general conditions as the dimension goes to infinity. The methods in this paper will follow the tried-and-true approach of relating the problem to a matrix integral through the Kac-Rice formula, then invoking a large deviation principle (LDP) to obtain the asymptotics. To be more precise, let us make our assumptions on $F$ explicit.

Classically, $F$ is viewed as a map $F : S^{N-1}(\sqrt{N}) \subset \R^{N} \rightarrow \R^{N}$ by identifying $S^{N-1}(\sqrt{N})$ with the usual $(N-1)$ dimensional sphere in $\R^N$ centered at 0 with radius $\sqrt{N}$ and for $x \in S^{N-1}(\sqrt{N}) \subset \R^N$, $$T_xS^{N-1}(\sqrt{N}) = \{ v \in \R^N : \langle v, x \rangle = 0 \}$$where $\langle \cdot, \cdot \rangle$ denotes the usual inner product in $\R^N$. With this framework, the vector fields considered in Fyodorov~\cite{Fyo2} take the form: \begin{equation*}
F(x) = -\lambda(x)x + f(x) + h
\end{equation*}where $h = (h_1,...,h_N)$ is an $N$-dimensional Gaussian vector with covariance structure $$\E[h_i h_j] = \sigma^2 \delta_{ij}$$for some $\sigma > 0$, $\delta_{ij}$  the usual Kronecker delta and $f$ is an $N$-dimensional smooth Gaussian field with covariance kernel\begin{equation*}\label{eq:Phi}\E[f_i(x) f_j(y)] = \delta_{ij} \Phi_1 \left(\frac{ \langle x, y \rangle}{N}\right) + \frac{x_jy_i}{N} \Phi_2 \left(\frac{ \langle x, y \rangle}{N}\right)\end{equation*}where $\Phi_1$ and $\Phi_2$ are smooth functions satisfying \begin{equation}\label{eq:phi}0 < \Phi_1(1) < \Phi'_1(1), -\Phi_1(1) \leq \Phi_2(1) \leq \Phi_1(1).\end{equation}The Lagrange multiplier $\lambda$ is chosen so that the vector belongs to $T_xS^{N-1}(\sqrt{N})$. Explicitly,$$\lambda(x) = \frac{1}{N} \langle x, f(x) + h \rangle.$$ 
We also make the added assumption that $h$ is independent of $f$. Before stating our results, we define two quantities which will play an important role in our analysis:$$\tau = \frac{\Phi_2(1)}{\Phi'_1(1)}, \text{   } b^2 = \frac{\sigma^2 + \Phi_1(1)}{\Phi'(1)}.$$The restrictions given by (\ref{eq:phi}) imply so $-1 < \tau \leq 1$ and $b^2 + \tau \geq 0$. We additionally require that $b^2 + \tau > 0$ and restrict ourselves to the non-gradient case $\tau \neq 1$. 

In our first main result, we compute the exponential rate of the expected number $\E \mathcal{N}_m$ of critical points with $m$ unstable directions:

\begin{theorem}\label{mainthm}For $ b < 1$ and $-1 < \tau < 1$, we have:\begin{equation*}
	\lim_{N \rightarrow \infty} \frac{1}{N} \log \E \mathcal{N}_{m} = \log \frac{1}{b} - \frac{(1-b^2)(1+\tau)}{2(b^2 + \tau)}.
	\end{equation*}
\end{theorem}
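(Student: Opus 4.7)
My plan is to follow the Kac-Rice plus large-deviation strategy indicated in the introduction. First I would apply the Kac-Rice formula to write
$$\E\mathcal{N}_m = \int_{S^{N-1}(\sqrt{N})} \E\Bigl[\,|\det JF(x)|\,\mathbbm{1}_{\{JF(x)\ \text{has exactly}\ m\ \text{eigenvalues with}\ \mathrm{Re}\geq 0\}}\,\Bigm|\, F(x)=0\Bigr]\, p_{F(x)}(0)\, d\mathrm{vol}(x).$$
Because the covariance kernel of $f$ depends on $x,y$ only through $\langle x,y\rangle$, the integrand is constant in $x$, so the whole expression reduces to the volume of the sphere times a single-point evaluation. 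Fixing a reference point $x_0$ and a tangent frame at $x_0$, I would then compute the joint Gaussian law of $(F(x_0), JF(x_0))$ from the given covariance formulas. After conditioning on $F(x_0)=0$ (which forces $\lambda(x_0)\,x_0 = f(x_0)+h$), the tangent Jacobian decomposes as $JF(x_0) = -\lambda I_{N-1} + M$, where $\lambda := \lambda(x_0)$ is a real Gaussian scalar with variance of order $(\sigma^2+\Phi_1(1))/N$, and $M$ is an $(N-1)\times(N-1)$ real elliptic Gaussian matrix with entrywise variance $\Phi_1'(1)/N$ and transpose-correlation parameter $\tau = \Phi_2(1)/\Phi_1'(1)$.

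With this reduction, $\E\mathcal{N}_m$ becomes an integral over $\lambda\in\R$ of
$$\E\Bigl[|\det(M - \lambda I)| \,\mathbbm{1}_{\{M-\lambda I\ \text{has exactly}\ m\ \text{right half-plane eigenvalues}\}}\Bigr]$$
weighted by a Gaussian density in $\lambda$. I would next invoke the LDP for the empirical spectral distribution of real elliptic random matrices: after normalization by $\sqrt{\Phi_1'(1)}$, the spectrum concentrates on the uniform measure on the ellipse with semi-axes $1+\tau$ and $1-\tau$, and a continuous-mapping argument gives
$$\frac{1}{N}\log|\det(M - \lambda I)| \longrightarrow \int \log|z - \tilde\lambda|\, d\mu_{\mathrm{ell}}(z), \qquad \tilde\lambda := \lambda/\sqrt{\Phi_1'(1)},$$
with a closed form whenever $\tilde\lambda$ lies outside the ellipse. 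Combining this limit with the Gaussian weight $\exp(-\lambda^2/(2b^2\Phi_1'(1)))$ yields a rate functional $\Psi(\tilde\lambda)$, and Laplace's method reduces the problem to maximizing $\Psi$ over the set of $\tilde\lambda$ compatible with the constraint on $m$.

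The main obstacle will be making the LDP for the absolute-value determinant rigorous in the non-Hermitian elliptic setting. Unlike the gradient case treated by Auffinger--Ben Arous--\v{C}ern\'y \cite{ABC}, where the relevant Hessian is symmetric and the logarithmic energy of the real spectrum gives everything for free, here one must invoke Girko's Hermitization together with an LDP for the singular values of $M - \lambda I$, and carefully control the smallest singular value to rule out an exponential contribution from near-zero modes. A secondary delicate point is to verify that the indicator $\mathbbm{1}\{\text{exactly }m\text{ unstable directions}\}$ does not perturb the exponential rate --- this is why the answer in the theorem is independent of $m$. Concretely, the unique optimum of $\Psi$ should sit on the edge $\tilde\lambda = 1+\tau$ (the threshold at which the ellipse first touches the imaginary axis), and fluctuations of finitely many eigenvalues crossing this threshold are sub-exponential in $N$, so a fixed $m$-constraint can only be felt at a lower order. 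Once the optimum is located, plugging $\tilde\lambda = 1+\tau$ into the closed form for $\Psi$ and simplifying using the definitions of $b^2$ and $\tau$ should produce the claimed rate $\log(1/b) - (1-b^2)(1+\tau)/[2(b^2+\tau)]$.
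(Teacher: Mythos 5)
Your reduction via Kac--Rice, rotational invariance, and the identification $JF = -\lambda I + M$ with $M$ elliptic matches the paper's Sections 5--6, and your final variational picture (optimum at the edge $\tilde\lambda = 1+\tau$, answer independent of $m$) is the right one. But the core of the argument --- evaluating $\lim_N \frac1N\log \E\bigl[|\det(M-\lambda I)|\,\mathbbm{1}_m\bigr]$ --- is where your plan has genuine gaps, and it is also where you diverge from the paper. The paper never attempts a Hermitization/log-determinant LDP. Instead it observes ({\sc Lemma}~\ref{uppingDim}) that the factor $|\det(X-tI)|$ times the Gaussian weight in $t$ exactly completes the joint eigenvalue density of an $N\times N$ GEE in which $t$ sits as an additional \emph{real} eigenvalue, and the indicator $i_m$ forces $t$ to be the eigenvalue with $(m+1)$-th largest real part. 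This converts $\E\mathcal{N}_m$ into $b^{1-N}\,\E_N[\exp(-cN\lambda_{m}^2)\mathbbm{1}_\R(\lambda_{m})]$ ({\sc Theorem}~\ref{eq:aux}), after which everything reduces to a large deviation principle for $\lambda_m(X)$ with rate $mI_\tau$ ({\sc Theorem}~\ref{largedev}) plus Varadhan. Your route would instead require precise exponential asymptotics of the expected absolute characteristic polynomial jointly with the index constraint, which is substantially harder.

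Two concrete problems with your sketch. First, the continuous-mapping/ESD-concentration argument gives at best the in-probability limit of $\frac1N\log|\det(M-\lambda I)|$; what you need is $\frac1N\log\E\bigl[|\det|\cdot\mathbbm{1}_m\bigr]$, and atypical spectral configurations on which the determinant is exponentially larger (or on which $\mathbbm{1}_m=1$) could in principle dominate the expectation. Controlling this requires an exponential-scale uniform integrability argument --- a truncation of the spectrum plus the fact that ESD deviations cost $e^{-cN^2}$ --- which is exactly the machinery the paper deploys in the proof of {\sc Theorem}~\ref{largedev} (the upper bound via $\sup_{\mu\in\mathbb{B}_\epsilon}\Psi(\mu,z)$ and the speed-$N^2$ LDP). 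Second, your claim that the exactly-$m$ constraint contributes only ``sub-exponentially'' is wrong as stated: pulling $m$ eigenvalues to the right of a point $x>1+\tau$ costs $e^{-mNI_\tau(x)}$, which is exponential at speed $N$, and having fewer than $m$ eigenvalues to the right of a point inside the bulk costs $e^{-cN^2}$. The $m$-independence of the answer holds only because the variational problem localizes at the edge, where $I_\tau(1+\tau)=0$; and even there the lower bound requires showing that the top $m$ eigenvalues of the elliptic ensemble can all be real and within $\delta$ of the edge at cost $e^{-o(N)}$ --- the most delicate part of the paper's Section 4, which your proposal does not address.
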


This result says that there exists a curve $\tau(b)$ given explicitly by $$\tau(b) = - \frac{2b^2 \log b + 1 - b^2}{2 \log b + 1 - b^2}$$such that if $\tau > \tau(b)$, we have exponentially abundant equilibria with $m$ unstable directions and if $\tau < \tau(b)$ the probability of finding such equilibria is exponentially small. We remark that the case $b > 1$ is the ``topologically trivial" case with only two equilibrium points in the limit (see Fyodorov~\cite{Fyo2}), so we will omit the analysis of this case. 

 We are also interested in the case when we have a diverging number of unstable directions. Let $U_{\tau}$ denote the uniform distribution on the ellipse $$E_{\tau} =\left\{(x,y) : \frac{x^2}{(1+\tau)^2} + \frac{y^2}{(1-\tau)^2} \leq 1 \right\},$$ $\gamma \in (0,1)$, and define $s_{\gamma} \in (-1-\tau,1+\tau)$ to be the unique number such that $$U_{\tau} ( \textnormal{Re } z \geq s_{\gamma}) = \gamma.$$
 \begin{theorem}\label{divind}
Let $m(N)$ be a sequence of integers which satisfy $\frac{m(N)}{N} \rightarrow \gamma \in (0,1)$. Then, $$\lim_{N \rightarrow \infty} \frac{1}{N} \log \E \mathcal{N}_{m(N)} = \log \frac{1}{b} - \frac{1-b^2}{2(b^2+\tau)(1+\tau)} s^2_{\gamma}.$$
 \end{theorem}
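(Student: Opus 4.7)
The plan is to follow the same Kac--Rice framework used to prove Theorem~\ref{mainthm}, adapting only the asymptotic analysis of the resulting matrix integral to the regime where the index grows linearly with $N$. After invoking the Kac--Rice formula for sections of the tangent bundle and exploiting the rotational symmetry of the law of $F$, one fixes a reference point on $S^{N-1}(\sqrt N)$ and computes the joint distribution of $(F(x), JF(x))$. Conditional on $F(x)=0$, the Jacobian in tangent coordinates decomposes as $A_N - \lambda I_{N-1}$, where $A_N$ is a (suitably normalized) real elliptic matrix of parameter $\tau$ whose spectrum concentrates on $E_\tau$, and $\lambda$ is an essentially independent Gaussian with variance $\sigma_\lambda^2$ computable from $\sigma$, $\Phi_1(1)$, $\Phi_2(1)$ and $\Phi_1'(1)$. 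This reduces $\E\mathcal{N}_{m(N)}$ to an integral of the form
\begin{equation*}
\E\mathcal{N}_{m(N)} = C_N \int_{\R} g_{\sigma_\lambda}(\lambda)\,\E\bigl[\,|\det(A_N - \lambda I)|\,\mathbf{1}_{\{i_+(A_N - \lambda I) = m(N)\}}\bigr]\,d\lambda,
\end{equation*}
where $g_{\sigma_\lambda}$ is the Gaussian density of $\lambda$ and $i_+$ denotes the number of eigenvalues with non-negative real part.

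The next step is a Laplace-type evaluation based on the elliptic law. Since the empirical spectral measure of $A_N$ converges almost surely to $U_\tau$, the spectrum of $A_N - \lambda I$ concentrates on the translate $E_\tau - \lambda$, and $\tfrac{1}{N}i_+(A_N - \lambda I)\to U_\tau(\mathrm{Re}\,z \geq \lambda)$. Combined with $m(N)/N \to \gamma$, this pins the dominant contribution to $\lambda$ in a neighborhood of $s_\gamma$. Concentration of the log-determinant, $\tfrac{1}{N}\log|\det(A_N - \lambda I)| \to \int_{E_\tau}\log|z-\lambda|\,dU_\tau(z)$, together with the exponential tail of $g_{\sigma_\lambda}$, then yields
\begin{equation*}
\lim_{N\to\infty}\frac{1}{N}\log\E\mathcal{N}_{m(N)} = \int_{E_\tau}\log|z-s_\gamma|\,dU_\tau(z) - \frac{s_\gamma^2}{2\sigma_\lambda^2} + c_0
\end{equation*}
for an explicit constant $c_0$. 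A direct computation of the logarithmic potential of the uniform measure on the ellipse (which is a quadratic function of the real coordinate inside $E_\tau$) and of $\sigma_\lambda^2$ then collapses this expression to the stated $\log(1/b) - \frac{(1-b^2)s_\gamma^2}{2(b^2+\tau)(1+\tau)}$.

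The main technical obstacle is to upgrade the weak convergence statements above to quantitative bounds strong enough to control the event $\{i_+(A_N - \lambda I) = m(N)\}$ at the exponential scale. Even for $\lambda$ close to $s_\gamma$ the index of a real elliptic matrix is typical only up to fluctuations of order $\sqrt{N}$, so one must show that pinning $i_+$ to the specific integer $m(N)$ costs at most a polynomial factor, which is invisible to the exponential rate. For $\lambda$ bounded away from $s_\gamma$ one needs an exponential upper bound on $\mathbb{P}(i_+(A_N-\lambda I) = m(N))$, which follows from a Ben Arous--Guionnet-type large deviation principle for the eigenvalue empirical measure adapted to the non-Hermitian elliptic ensemble. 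As a sanity check, letting $\gamma \downarrow 0$ sends $s_\gamma \uparrow 1+\tau$, and the expression above reduces to $\log(1/b) - \frac{(1-b^2)(1+\tau)}{2(b^2+\tau)}$, recovering the fixed-index rate of Theorem~\ref{mainthm}.
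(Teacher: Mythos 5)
Your skeleton reaches the correct answer, and its first half --- Kac--Rice, $O(N)$-invariance, and the reduction to $C_N\int g_{\sigma_\lambda}(\lambda)\,\E\bigl[|\det(A_N-\lambda I)|\,\mathbbm{1}\{i_+(A_N-\lambda I)=m(N)\}\bigr]d\lambda$ --- is exactly Theorem \ref{eq:KR}. The final bookkeeping is also right: on the real axis inside $E_\tau$ the potential is $\tfrac{x^2}{2(1+\tau)}-\tfrac12$, and $\tfrac{1}{2(1+\tau)}-\tfrac{1}{2(b^2+\tau)}=-\tfrac{1-b^2}{2(1+\tau)(b^2+\tau)}$ reproduces the stated coefficient of $s_\gamma^2$; your $\gamma\downarrow 0$ consistency check is valid.

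The gap is in the Laplace step, and it is twofold. First, convergence in probability of $\tfrac1N\log|\det(A_N-\lambda I)|$ to the logarithmic potential does not yield the exponential rate of $\E[|\det(A_N-\lambda I)|\cdots]$: for the upper bound you need uniform integrability at exponential scale, i.e., you must rule out that rare configurations with outlying eigenvalues and atypically large determinants dominate the expectation (this requires at least exponential tightness of the extreme eigenvalues plus a moment bound such as $\E[|\det|^2]\le e^{CN}$), and for the lower bound you need that on an event of probability $e^{-o(N)}$ the determinant is at least $e^{N(\psi-\epsilon)}$ \emph{and} the index equals $m(N)$ exactly. Second, your claim that pinning $i_+$ to the exact integer $m(N)$ costs only a polynomial factor is an anti-concentration / local-CLT statement for the index of a real elliptic matrix; the speed-$N^2$ LDP for the empirical measure you invoke only localizes the index to within $o(N)$ of $N\gamma$ and cannot produce it. The paper circumvents both issues with an exact algebraic identity (Lemma \ref{uppingDim}): $|\det(X-tI)|$ times the Gaussian weight in $t$ supplies precisely the extra Vandermonde and weight factors needed to promote $t$ to a real eigenvalue of an $N\times N$ GEE, and the constraint $i_m(X-tI)=1$ identifies that eigenvalue as $\lambda_{m+1}$ of the larger matrix; this converts the whole integral, with no concentration input on the determinant, into $\E_N[e^{-cN\lambda_{m+1}^2}\mathbbm{1}_{\R}(\lambda_{m+1})\cdots]$ (Theorem \ref{eq:aux}), after which Theorem \ref{divind} follows from the speed-$N^2$ concentration of $\textnormal{Re}\,\lambda_{m(N)}$ at $s_\gamma$ (Lemma \ref{divindex1}). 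To keep your direct route you would have to prove the determinant moment bounds and the index anti-concentration, which as written are asserted rather than established.
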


Note that this quantity is maximized at $s_{\gamma} = 0$ which occurs at $\gamma = 1/2$. This allows us to recover estimates on the average total number of equilibria, since {\sc Theorem} \ref{divind} implies $$\E[\mathcal{N}_{m(N)}] \leq \E[\mathcal{N}_{tot}] \leq (N+1) \E[\mathcal{N}_{m(N)}]$$ for $N$ large and $m(N) / N \rightarrow 1/2$. Thus, both $\E[\mathcal{N}_{tot}]$ and $\E[\mathcal{N}_{m(N)}]$ have the exponential rate given by $\log \frac{1}{b}$ agreeing with {\sc Proposition} 2.5 of Fyodorov~\cite{Fyo1}.

Our next result details the relationship between the value of the Lagrange multiplier and critical points. For a Borel set $B \subset \R$, define $\mathcal{N}_{m}(B)$ to be the number of equilibria with $m$ unstable directions whose Lagrange multiplier has values in $B$.

\begin{theorem}\label{lagmult}
For $- \infty \leq c < d \leq \infty$ we have
\begin{equation*}
\lim_{N \rightarrow \infty} \frac{1}{N} \log \E \mathcal{N}_m(c,d) = \begin{cases} \log \frac{1}{b} - \frac{(1-b^2)(1+\tau)}{2(b^2 + \tau)}  & \tif c < (1+\tau) \sqrt{\Phi'_1(1)} < d \\
\log \frac{1}{b} -  \frac{(1-b^2)c^2}{2\Phi'_1(1)(b^2 + \tau)(1+\tau)} - (m+1)I_{\tau} \left( \frac{1}{\sqrt{\Phi'_1(1)}}c \right) & \tif c > (1+\tau) \sqrt{\Phi'_1(1)}\\
-\infty & \tif d < (1 + \tau) \sqrt{\Phi'_1(1)}  \\
\end{cases}
\end{equation*}
\end{theorem}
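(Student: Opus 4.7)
My plan is to repeat the Kac-Rice / elliptic random-matrix argument underlying Theorems \ref{mainthm} and \ref{divind}, keeping the constraint $\lambda(x)\in(c,d)$ as an additional indicator throughout. Starting from
\begin{equation*}
\E\mathcal{N}_m(c,d) = \int_{S^{N-1}(\sqrt{N})} \E\bigl[\,|\det JF(x)|\,\mathbf{1}_{\{i(x)=m\}}\,\mathbf{1}_{\{\lambda(x)\in(c,d)\}}\,\big|\,F(x)=0\,\bigr]\,p_{F(x)}(0)\,dx,
\end{equation*}
rotational invariance of the law of $F$ makes the integrand constant in $x$ and collapses the integral to $\vol(S^{N-1}(\sqrt{N}))$ times its value at a fixed basepoint $x_0$. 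The Gaussian conditioning that underlies the proof of Theorem \ref{mainthm} shows that, given $F(x_0)=0$, the tangential Jacobian has the form $-\lambda(x_0)\,\mathbf{I}_{N-1}+\sqrt{\Phi'_1(1)}\,G_\tau$, where $G_\tau$ is a real elliptic Ginibre matrix of size $N-1$ with asymmetry parameter $\tau$, and $\lambda(x_0)$ is an independent Gaussian whose mean and variance are explicit in $\Phi_1(1),\Phi_2(1),\sigma^2$. After substituting and rescaling $s=\lambda/\sqrt{\Phi'_1(1)}$, the problem reduces to finding the exponential rate of
\begin{equation*}
\int_{c/\sqrt{\Phi'_1(1)}}^{d/\sqrt{\Phi'_1(1)}} \E\Bigl[\,|\det(s\mathbf{I}-G_\tau)|\,\mathbf{1}\{\#\{z\in\mathrm{Spec}(G_\tau):\,\mathrm{Re}\,z\geq s\}=m\}\,\Bigr]\,\tilde{\varphi}_N(s)\,ds,
\end{equation*}
where $\tilde{\varphi}_N$ is the Gaussian density of $\lambda(x_0)/\sqrt{\Phi'_1(1)}$, up to sub-exponential prefactors.

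The next step is to identify the exponential rate of the integrand. Combining the Kac-Rice prefactors, the Gaussian factor $\tilde{\varphi}_N(s)$, and the mean logarithmic determinant $\int\log|s-z|\,dU_\tau(z)$ produced by the convergence of the empirical spectrum of $G_\tau$ to $U_\tau$ must reproduce, when the index indicator is dropped, the ``base'' rate
\begin{equation*}
R(s) := \log\frac{1}{b} - \frac{(1-b^2)\,s^2}{2(b^2+\tau)(1+\tau)};
\end{equation*}
one essentially reads this off the proof of Theorem \ref{mainthm} before the Laplace step in $s$ is taken, and the check $R(1+\tau) = \log(1/b)-\tfrac{(1-b^2)(1+\tau)}{2(b^2+\tau)}$ is immediate. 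The index indicator then costs extra: for $s>1+\tau$ the threshold lies outside the spectral support of $G_\tau$, and forcing exactly $m$ eigenvalues past it is an edge large deviation whose rate, by the LDP for the rightmost real-part eigenvalue of the real elliptic Ginibre ensemble, is $-(m+1)I_\tau(s)$; for $s<1+\tau$ one would have to expel a macroscopic fraction of eigenvalues from the bulk, which costs $\Theta(N^2)$ and thus drives the rate to $-\infty$.

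A direct application of Laplace's method then maximizes $R(s)-(m+1)I_\tau(s)\,\mathbf{1}_{\{s>1+\tau\}}$ over the rescaled window $\bigl(c/\sqrt{\Phi'_1(1)},\,d/\sqrt{\Phi'_1(1)}\bigr)$. Since $R$ is strictly decreasing on $[0,\infty)$ while $I_\tau\geq 0$ is increasing and vanishes at $1+\tau$, the effective exponent is strictly decreasing on $[1+\tau,\infty)$ and equals $-\infty$ on $(-\infty,1+\tau)$, so its constrained maximum sits at $s=\max\!\bigl(c/\sqrt{\Phi'_1(1)},\,1+\tau\bigr)$. Unwinding the three possible locations of $(1+\tau)\sqrt{\Phi'_1(1)}$ relative to $c$ and $d$ gives exactly the three cases of the theorem. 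The main obstacle will be the rigorous joint large-deviation analysis of the determinant together with the index constraint for the real elliptic Ginibre ensemble: one needs a Varadhan-type interchange that remains valid on the rare event that $m$ eigenvalues lie past the spectral edge, along with careful bookkeeping of real versus complex-conjugate eigenvalues, which is what is expected to turn the naive coefficient $m$ into $m+1$.
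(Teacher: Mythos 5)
Your overall architecture (Kac--Rice with the Lagrange-multiplier indicator carried along, rotational invariance, reduction to the real elliptic ensemble, then Laplace in the rescaled multiplier $s$) is the same as the paper's, and your final case analysis is correct. But the crux of the argument --- identifying the exponential rate of $\E_{N-1}\bigl[\lvert\det(sI-G_\tau)\rvert\,\mathbf{1}\{\#\{z:\mathrm{Re}\,z\geq s\}=m\}\bigr]$ --- is not proved, and the heuristic you give for it contains two compensating errors. First, your ``base rate'' claim is wrong for $s>1+\tau$: dropping the index indicator, the determinant contributes $e^{(N-1)\phi_\tau(s)}$ with $\phi_\tau(s)=\tfrac{s^2}{2(1+\tau)}-\tfrac12-I_\tau(s)$ outside the bulk (Lemma \ref{logpot}), so the prefactors, Gaussian weight and determinant combine to $R(s)-I_\tau(s)$, not $R(s)$; the identity $\phi_\tau=\tfrac{s^2}{2(1+\tau)}-\tfrac12$ holds only inside the support of $U_\tau$, which is why your check at $s=1+\tau$ succeeds. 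Second, the index indicator alone costs $mI_\tau(s)$, not $(m+1)I_\tau(s)$: the LDP for the eigenvalue with $m$-th largest real part (Theorem \ref{largedev}) has rate $mI_\tau$, and there is no version of it that produces $m+1$ from the event that $m$ eigenvalues exceed $s$. The extra unit of $I_\tau(s)$ comes from the determinant evaluated outside the spectral support, not from ``bookkeeping of real versus complex-conjugate eigenvalues'' as you suggest at the end. Since you explicitly defer the joint determinant-plus-index analysis as ``the main obstacle,'' the proof is incomplete precisely at the point where it needs to be carried out.

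The paper resolves this obstacle by a device you do not have: Lemma \ref{uppingDim} shows that the weight $\lvert\det(X-tI)\rvert\,i_m(X-tI)\,e^{-\frac{N-1}{2(1+\tau)}t^2}\,dt$ against $\Pa_{N-1}$ is, exactly and not just to exponential order, the marginal law of the event that the $(m+1)$-st eigenvalue (by real part) of an $N\times N$ GEE matrix is real and equal to $t\sqrt{N/(N-1)}$: the determinant supplies the missing Vandermonde interactions and $i_m$ enforces the correct rank in the ordering. After this lift, Theorem \ref{eq:aux} expresses $\E\mathcal{N}_m(c,d)$ as a single expectation of $\exp\bigl(-\tfrac{N(1-b^2)}{2(b^2+\tau)(1+\tau)}\lambda^2\bigr)\mathbf{1}_{(c,d)}(\sqrt{\Phi_1'(1)}\lambda)$ under the law of that eigenvalue, and Theorem \ref{lagmult} follows from one application of Varadhan's lemma with the rate function $(m+1)I_\tau$ of Theorem \ref{largedev}. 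If you want to keep the $(N-1)$-dimensional formulation, you must prove the joint estimate yourself (including the lower bound, where one has to show the $m$ outliers and the near-degeneracy of $\lvert s-\lambda_i\rvert$ do not destroy the determinant at exponential scale); the dimension-lifting identity is the clean way to avoid exactly that.
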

where $I_{\tau}$ is defined in {\sc Lemma} \ref{logpot} below. Hence, the probability of finding equilibria with $m$ unstable directions and a Lagrange multiplier less than $(1+\tau)\sqrt{\Phi'_1(1)}$ becomes exponentially small. Since the function $$c \mapsto  \frac{(1-b^2)c^2}{2\Phi'_1(1)(b^2 + \tau)(1+\tau)} + mI_{\tau} \left( \frac{1}{\sqrt{\Phi'_1(1)}}c \right)$$is increasing, unbounded and attains zero at $(1+\tau) \sqrt{\Phi'_1(1)}$, there is a unique point $z_0 > (1+\tau)\sqrt{\Phi'_1(1)}$ for which it is equal to $\log \left( \frac{1}{b} \right)$. The probability of finding equilibria with $m$ unstable directions and a Lagrange multiplier larger than $ z_0$ also becomes exponentially small. By the symmetry of the problem (and more explicitly by {\sc Theorem} \ref{eq:aux}), we have that $\mathcal{N}_{m}(B) = \mathcal{N}_{N-m}(-B)$ so we can make analogous statements about equilibria with $m$ stable directions. 

This paper is organized as follows. {\sc Section} 2 contains some preliminary results on the logarithm potential of the uniform distribution $U_{\tau}$ on the ellipse $E_{\tau}$ that will be used in the next section. In { \sc Section} 3, we introduce the Gaussian Elliptic Ensemble and discuss some of its properties. In {\sc Section} 4 we prove a large deviation result for the eigenvalue of the Gaussian Elliptic Ensemble with $m$th largest real part.  The goal of {\sc Section} 5 is to relate $\E \mathcal{N}_{m}(B)$ to a matrix integral involving the Gaussian Elliptic Ensemble. The proof of the main assertion in {\sc Section} 5 is postponed to {\sc Section} 6. Finally, in  {\sc Section} 7 we provide the proofs for the main results stated in this section.

\begin{Acknowledgements}
\text{  }	I would like to thank Antonio Auffinger for introducing me to the problems solved in this paper.
\end{Acknowledgements}

\section{The logarithmic potential on an ellipse}

In this section, we recall some properties of the logarithmic potential for the uniform distribution $U_{\tau}$ on the ellipse $E_{\tau}$ $$E_{\tau} =\left\{(x,y) : \frac{x^2}{(1+\tau)^2} + \frac{y^2}{(1-\tau)^2} \leq 1 \right\}, -1 < \tau \leq 1$$The logarithmic potential of $U_{\tau}$ is defined as a function $\phi_{\tau} : \R^2 \rightarrow \R$ explicitly given by $$\phi_{\tau}(x,y) = \int_{E_{\tau}} \log |x + iy - w | \textnormal{ } U_{\tau}(dw).$$  We summarize the properties of $\phi_{\tau}$  we will need in the lemma below. 

\begin{lemma}\label{logpot}
	On the set $\{x \geq 1+ \tau,  y \geq 0\}$:
	
	\begin{enumerate}
		\item $\phi_{\tau}(x,0) + \frac{1}{2}- \frac{x^2}{2(1+\tau)}  = -I_{\tau}(x)$ where $$I_{\tau}(x) := \begin{cases} \frac{1}{2(1+\tau)}x^2 - \frac{x(x-\sqrt{x^2 - 4 \tau})}{4 \tau}  -  \log(\frac{x + \sqrt{x^2-4\tau}}{2}) & \tif \tau \neq 0 \\ -\log x + \frac{1}{2}x^2 - \frac{1}{2} &\tif  \tau = 0
		\end{cases}$$
		\item  $\partial_x \phi_{\tau}(x,y) \leq \frac{x}{1+\tau}$
		\item $\partial_y \phi_{\tau }(x,y) \leq \frac{y}{1-\tau}$
	\end{enumerate}
\end{lemma}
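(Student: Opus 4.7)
\emph{Proof proposal.} The plan is to obtain the Cauchy transform of $U_\tau$ on the exterior of $E_\tau$ in closed form, read off the gradient of $\phi_\tau$, and then derive all three statements by direct calculation.

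For $z \notin \overline{E_\tau}$, define the Cauchy transform
\[
G(z) := \int_{E_\tau} \frac{dU_\tau(w)}{z - w}.
\]
Since $\phi_\tau$ is the real part of (a local branch of) $\int \log(z-w)\,dU_\tau(w)$, one has $\partial_x \phi_\tau = \mathrm{Re}\,G$ and $\partial_y \phi_\tau = -\mathrm{Im}\,G$ on the exterior. To compute $G(z)$ I would apply the Cauchy--Pompeiu formula to convert $\int_{E_\tau} dA(w)/(z-w)$ into a boundary integral over $\partial E_\tau$, substitute the Joukowski parametrization $w = \zeta + \tau\zeta^{-1}$ on $|\zeta|=1$, and then write $z = u + \tau/u$ with $|u| > 1$ (the inverse branch of $\zeta \mapsto \zeta + \tau/\zeta$ sending $\{|\zeta|>1\}$ bijectively onto $\Cm \setminus \overline{E_\tau}$). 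A residue computation at the two poles inside the unit circle---a double pole at $\zeta = 0$ and a simple pole at $\zeta = \tau/u$ (note $|\tau/u| < |\tau| < 1$)---yields the clean formula $G(z) = 1/u$.

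Writing $u = re^{i\theta}$ with $r > 1$, this produces the parametrization
\[
x = (r + \tau/r)\cos\theta, \qquad y = (r - \tau/r)\sin\theta, \qquad \partial_x\phi_\tau = \frac{\cos\theta}{r}, \qquad \partial_y\phi_\tau = \frac{\sin\theta}{r}.
\]
For part (1), restricting to $\theta = 0$ and $r = (x + \sqrt{x^2 - 4\tau})/2$ (from $x = r + \tau/r$, $r>1$) gives
\[
\partial_x\phi_\tau(x,0) = \frac{1}{r} = \frac{x - \sqrt{x^2 - 4\tau}}{2\tau}.
\]
Integrating this in $x$ via the standard antiderivative of $\sqrt{x^2 - a^2}$, and fixing the constant by the asymptotic $\phi_\tau(x,0) = \log x + O(x^{-2})$ at infinity, I expect to recover exactly the closed form in (1). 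The degenerate case $\tau = 0$ follows by continuity in $\tau$ (equivalently, by noting that the log potential of the uniform measure on the unit disk equals $\log|z|$ outside the disk, by the mean-value property).

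For parts (2) and (3), every point of $\{x \geq 1+\tau,\, y \geq 0\}$ other than the single point $(1+\tau, 0)$ lies in the exterior of $E_\tau$, so the above parametrization applies. Since $r + \tau/r > 0$ and $r - \tau/r > 0$ for $r > 1 > \sqrt{|\tau|}$, the conditions $x > 0$ and $y \geq 0$ force $\cos\theta > 0$ and $\sin\theta \geq 0$ respectively. An elementary calculation then gives
\[
\frac{x}{1+\tau} - \partial_x\phi_\tau = \frac{(r^2 - 1)\cos\theta}{r(1+\tau)} \geq 0, \qquad \frac{y}{1-\tau} - \partial_y\phi_\tau = \frac{(r^2 - 1)\sin\theta}{r(1-\tau)} \geq 0,
\]
both nonnegative since $r \geq 1$. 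At the remaining point $(1+\tau, 0)$ the limit $r \to 1^+$, $\theta \to 0^+$ (using that $\phi_\tau \in C^1(\R^2)$ because $U_\tau$ has bounded density) shows that both sides of each inequality agree. The main obstacle is the residue computation for $G(z)$: one must select the correct branch of the Joukowski inverse, identify which poles lie inside the unit circle, and carefully handle the double pole at $\zeta = 0$ together with the simple pole at $\zeta = \tau/u$. Once $G(z) = 1/u$ is in hand, parts (1)--(3) follow by routine calculus and the elementary inequality $r \geq 1$.
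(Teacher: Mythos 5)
Your proposal is correct, and it reaches the same key identity as the paper --- the Cauchy transform $\int_{E_\tau}(z-w)^{-1}\,dU_\tau(w) = \tfrac{1}{2\tau}\bigl(z-\sqrt{z^2-4\tau}\bigr) = 1/u$ with $z = u + \tau/u$, $|u|>1$ --- but by a genuinely more self-contained route. The paper simply cites Bell et al.\ for that formula, fixes the constant of integration in part (1) by quoting the value $\phi_\tau(1+\tau,0)=\tau/2$ from Hiai--Petz, and disposes of parts (2) and (3) by referring to the proof of the same Hiai--Petz lemma. You instead derive the Cauchy transform from scratch via Cauchy--Pompeiu and a residue computation in the Joukowski variable, normalize the antiderivative using $\phi_\tau(x,0)=\log x + O(x^{-2})$ at infinity (which is valid, since $\int w\,dU_\tau = 0$), and prove (2)--(3) by the explicit computation $\tfrac{x}{1+\tau}-\partial_x\phi_\tau = \tfrac{(r^2-1)\cos\theta}{r(1+\tau)}\ge 0$ and its analogue in $y$, using that $x>0$, $y\ge 0$ force $\cos\theta>0$, $\sin\theta\ge 0$ because $r\pm\tau/r>0$ for $r>1>\sqrt{|\tau|}$. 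This buys independence from the two external references at the cost of the contour computation; the paper's version is shorter but opaque on (2)--(3). Two small points to tidy up when executing: in the standard setup $\int_{E_\tau}\frac{dA(w)}{z-w}=\frac{1}{2i}\oint_{\partial E_\tau}\frac{\bar w\,dw}{z-w}$ with $\bar w = \zeta^{-1}+\tau\zeta$ on $|\zeta|=1$, the pole at $\zeta=0$ has order three rather than two (the residue calculus still yields $1/u$); and your argument, like the paper's statement of part (3), implicitly requires $\tau\neq 1$, which is consistent with the paper's standing assumption.
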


\begin{proof}
	
	In the case $\tau = 0$, we can compute $\phi_0 (x,y) = \frac{1}{2} \log(x^2+y^2)$ from which we can verify all the results instantly. Henceforth, we shall assume $\tau \neq 0$. 
	
	From page 9 of Bell et. al \cite{BEF}, we have for $z = x  + iy$:\begin{equation*}\label{Cauchy}
	\partial_x \phi_{\tau}(x,y) - i \partial_y \phi_{\tau}(x,y)  = \int_{E_{\tau}} \frac{1}{z-w} U_{\tau}(dw) = \frac{1}{2\tau} \left( z - \sqrt{z^2 - 4 \tau}\right).
	\end{equation*}
	In particular, this implies $$\partial_x \phi_{\tau}(x,0) = \frac{1}{2 \tau} \left(x - \sqrt{x^2-4\tau} \right).$$
	By integration, we obtain $$\phi_{\tau}(x,0) - \phi_{\tau}(1+\tau,0) =  \frac{1}{2(1+\tau)}x^2 -\frac{1+\tau}{2} -I_{\tau}(x).$$ By {\sc Lemma }5.3.12 of Hiai and Petz~\cite{HP}, we know that $\phi_{\tau}(1+\tau,0) = \frac{\tau}{2}$ thus yielding the first statement. Statements (2) and (3) follow from the proof of {\sc Lemma} 5.3.12 of Hiai and Petz\cite{HP}.
\end{proof}
Since $y \mapsto \phi_{\tau}(x,y)$ is an even function of $y$ for all $x$, we have the following corollary to {\sc Lemma} \ref{logpot}:

\begin{corollary}\label{logpotineq}Define $\Psi_{\tau}(x,y) = \phi_{\tau}(x,y) - \frac{x^2}{2(1+\tau)} - \frac{y^2}{2(1-\tau)}$. For $x \geq 1+\tau$, $y \in \R$ and $t \in (-1,1)$, $$\sup_{u \geq x,v \geq y}  \Psi_{\tau}(u,v) = \Psi_{\tau}(x,y) \leq \Psi_{\tau}(x,0).$$
	
\end{corollary}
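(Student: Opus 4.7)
The plan is to exploit the even symmetry of $\Psi_{\tau}$ in its second argument to reduce to the case $y \geq 0$, and then to establish that $\Psi_{\tau}$ is non-increasing separately in each variable on the quadrant $\{u \geq 1+\tau,\, v \geq 0\}$. Both monotonicities will fall directly out of Lemma \ref{logpot}(2)--(3), and once in hand they yield the chain of inequalities stated in the corollary essentially for free.

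First, since $y \mapsto \phi_{\tau}(x,y)$ is even (as remarked just before the corollary) and the subtracted term $\frac{y^2}{2(1-\tau)}$ is manifestly even, the map $v \mapsto \Psi_{\tau}(u,v)$ is an even function of $v$. I would therefore replace $y$ by $|y|$ at the outset, reducing the statement to the case $y \geq 0$ (so that the supremum constraint $v \geq y$ keeps us inside the quadrant where Lemma \ref{logpot} applies).

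Next, I would compute directly from the definition of $\Psi_{\tau}$,
$$\partial_u \Psi_{\tau}(u,v) = \partial_x \phi_{\tau}(u,v) - \frac{u}{1+\tau}, \qquad \partial_v \Psi_{\tau}(u,v) = \partial_y \phi_{\tau}(u,v) - \frac{v}{1-\tau},$$
both of which are non-positive on $\{u \geq 1+\tau,\, v \geq 0\}$ by parts (2) and (3) of Lemma \ref{logpot}. Hence for any $u \geq x \geq 1+\tau$ and $v \geq y \geq 0$,
$$\Psi_{\tau}(u,v) \leq \Psi_{\tau}(x,v) \leq \Psi_{\tau}(x,y),$$
with equality achieved at $(u,v) = (x,y)$. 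Taking the supremum gives the first equality of the corollary, and a single further application of monotonicity in $v$ (decreasing from $v = 0$ down to $v = y$) yields $\Psi_{\tau}(x,y) \leq \Psi_{\tau}(x,0)$.

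I do not anticipate a substantive obstacle here: the analytic work has already been absorbed into Lemma \ref{logpot}. The only bookkeeping issue is the sign of $y$, which is exactly why even symmetry must be invoked first, so that the constraints $v \geq y \geq 0$ keep $(u,v)$ within the region where the derivative estimates of Lemma \ref{logpot}(2)--(3) are valid.
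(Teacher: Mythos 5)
Your proof is correct and is essentially the paper's own argument: the paper states this corollary without proof, pointing only to the evenness of $y \mapsto \phi_{\tau}(x,y)$ and to Lemma \ref{logpot}, whose parts (2)--(3) are precisely the statements $\partial_u \Psi_{\tau} \leq 0$ and $\partial_v \Psi_{\tau} \leq 0$ on the quadrant $\{u \geq 1+\tau,\, v \geq 0\}$ that you use. The only caveat is your opening reduction ``replace $y$ by $|y|$'': for $y<0$ this changes the constraint set $\{v \geq y\}$ (which then contains $v=0$, where $\Psi_{\tau}$ is largest), so the first equality of the corollary as literally stated actually fails for $y<0$; this is an imprecision in the corollary's statement rather than in your argument, since the paper only ever needs the case $y \geq 0$ together with the bound $\Psi_{\tau}(x,y) \leq \Psi_{\tau}(x,0)$, both of which you establish.
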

\section{On the Gaussian Elliptic Ensemble}

We define the Gaussian Elliptic Ensemble(GEE) as an $N \times N$ random matrix $X$ whose entries are mean zero Gaussian random variables with covariance structure $$\E[X_{ij} X_{lk}] := \E[X_{ij} X_{lk}] = \frac{1}{N}( \delta_{il} \delta_{jk} + \tau \delta_{ik} \delta_{jl}), -1 < \tau \leq 1$$We can write the density of this measure against the Lebesgue measure $dX$ on the space of real $N \times N$ matrices:$$ \Pa_N(dX) = \frac{1}{Z_N(\tau)} \exp \left( -\frac{N}{2(1 - \tau^2)} \textnormal{Tr}(XX^T - \tau X^2) \right) dX$$where $$Z_N(\tau) = 2^{N/2} \pi^{N(N+1)/2} (1+\tau)^{N(N+1)/4}(1-\tau)^{N(N-1)/4}N^{\frac{N^2}{2}}.$$We shall also think of the eigenvalues of $X$ as ordered by decreasing real parts i.e., we will denote them by $\lambda_1(X),...\,,\lambda_N(X)$ with $\lambda_1(X)$ having the largest real part, with the understanding that if we have complex eigenvalues, we list the ones with positive imaginary part first. Almost surely, this is a well-defined ordering. When there is no room for confusion, we will usually drop the $X$ from the $\lambda(X)$ to ease the notation.

The purpose of this section is to state some properties of the GEE as well as to  prove a large deviation principle for the eigenvalue with the $m$th largest real part. We first require a formula for the joint distribution of the eigenvalues. Since a matrix distributed like the GEE will have real eigenvalues with positive probability, the joint distribution of the eigenvalues will not be absolutely continuous with respect to the Lebesgue measure on $\C^N$. Nevertheless, we can write out formulas if we restrict ourselves to the sets $$S_k = \{ X \textnormal{ has exactly } k \textnormal{ real eigenvalues}\}.$$ If $A \in S_k$, then we can write the eigenvalues of $A$ as $\sigma_1,...,\sigma_k, x_1 \pm i y_1,...,x_{\frac{N-k}{2}} \pm i y_{\frac{N-k}{2}}$. This suggests defining the measure $$\mu^{(N,k)}(d \sigma, dx, dy) = 2^{(N-k)/2} \prod_{i=1}^k d \sigma_i \prod_{j=1}^{\frac{N-k}{2}} dx_j dy_j$$ on the set $$\{ (\sigma,x,y) \in \R^k \times \R^{N-k} : \sigma_1 > ... > \sigma_k, x_1 > ... > x_{\frac{N-k}{2}}, y_i \geq 0 \, \forall i\}.$$ If we define the measure $\Pa_{N,k}$ by $\Pa_{N,k}( V) = \Pa_{N}(\lambda(X) \in V, X \in S_k)$ for any Borel set $V \subset \C^{N}$, then $\Pa_{N,k}$ is absolutely continuous with respect to $\mu^{(N,k)}$ and the density is given by
	\begin{equation*}\label{SK}
	\Pa_{N,k}(d \sigma, dx, dy) =  \frac{1}{K_N(\tau)} |\Delta(\sigma,x \pm iy)| \prod_{i=1}^{k} e^{-\frac{N}{2(1+\tau)}\lambda_i^2} \prod_{j=1}^{\frac{N-k}{2}} e^{-\frac{N}{1+\tau}(x_j^2 - y_j^2)} \textnormal{erfc} \left( \sqrt{\frac{2N}{1-\tau^2}} y_j \right) \mu^{(N,k)}(d \sigma, dx, dy)
	\end{equation*}where \begin{equation}\label{K}K_N(\tau) = 2^{N(N+1)/4} (1+\tau)^{N/2} N^{\binom{N+1}{2}/2} \prod_{j=1}^{N} \Gamma(j/2),\end{equation} $\Delta(\sigma,x \pm iy)$ is the Vandermonde polynomial $$|\Delta(\sigma, x \pm iy)|^2 = \prod_{u,v \in S, u \neq v} | u - v|, S = \{\sigma_1,...,\sigma_k, x_1 \pm i y_1,...,x_{\frac{N-k}{2}} \pm i y_{\frac{N-k}{2}}\}$$and erfc is the complementary error function $$\textnormal{erfc}(x) = \frac{2}{\sqrt{\pi}} \int_x^{\infty} e^{-t^2} dt.$$See Lehmann and Sommers~\cite{LS}.

We will find it convenient to rewrite the integrand in a more compact way, namely as $$\frac{1}{K_N(\tau)} | \Delta(\lambda)| \exp \left(-\frac{N}{2(1+\tau)}\sum_{j=1}^N \lambda^2_j \right) \prod_{j=1}^N \sqrt{\textnormal{erfc} \left( \sqrt{\frac{2N}{1-\tau^2}} |\Ima \lambda_j| \right)} $$ with the understanding that $\lambda_j = x_j + iy_j$ if $\lambda$ is complex, and $\lambda_j = \sigma_j$ if it's real. The disadvantage of this form is that it obscures the dependence of $k$ and the symmetry obtained from the fact that the complex eigenvalues come in pairs. 

We find it useful to get rid of the ordering of the eigenvalues by their real parts. We can do this by simply replacing our region of integration by $\{(\sigma,x,y) : y_i \geq 0 \, \forall i\}$ and dividing by the factor of $k! \left( \frac{N-k}{2} \right)!$

\section{Large deviation principle for $\lambda_m(X)$}

In this section, we establish a large deviation type of result for  $\lambda_m(X)$. 

\begin{theorem}\label{largedev}
	Under $\Pa_N$, the quantity $\lambda_m \cdot \mathbbm{1}_{\R}(\lambda_m)$ satisfies a large deviation principle with speed $N$ and good rate function $mI_{\tau}$ where $$I_{\tau}(x) = \begin{cases} \infty &\tif x < 1 + \tau \\  \frac{1}{2(1+\tau)}x^2 - \frac{x(x-\sqrt{x^2 - 4 \tau})}{4 \tau}  -  \log(\frac{x + \sqrt{x^2-4\tau}}{2}) & \tif x \geq 1 + \tau, \tau \neq 0 \\ -\log x + \frac{1}{2}x^2 - \frac{1}{2} &\tif x \geq 1 + \tau, \tau = 0.
	\end{cases}$$
	
\end{theorem}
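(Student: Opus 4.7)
The plan is to run a Laplace-type saddle point analysis on the Lehmann--Sommers joint eigenvalue density from Section~3, isolating the $m$ eigenvalues of largest real part from the $N-m$ bulk eigenvalues. Fix $x \geq 1+\tau$; the event $\{\lambda_m \in [x,x+\varepsilon],\ \lambda_m\in\R\}$ forces $\mathrm{Re}(\lambda_j)\geq x$ for $j<m$ (counting complex conjugate pairs by the positive-imaginary-part representative) and $\mathrm{Re}(\lambda_j)\leq x$ for $j>m$. The first step is to split the Vandermonde as
\[
|\Delta(\lambda)| = |\Delta(\lambda^{\mathrm{top}})|\cdot|\Delta(\lambda^{\mathrm{bulk}})|\cdot\prod_{i\leq m,\,j>m}|\lambda_i-\lambda_j|,
\]
and likewise factorize the Gaussian/erfc weight, summing over the admissible numbers $k$ of real eigenvalues.

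Next the cross term is rewritten as $\exp\bigl((N-m)\sum_{i=1}^m\int\log|\lambda_i-w|\,d\widehat\mu^{\mathrm{bulk}}(w)\bigr)$, and the elliptic circular law (concentration of $\widehat\mu^{\mathrm{bulk}}$ on the uniform measure $U_\tau$ on $E_\tau$) approximates it by $\exp(N\sum_{i=1}^m\phi_\tau(\mathrm{Re}\,\lambda_i,\mathrm{Im}\,\lambda_i))$ up to subexponential corrections. After this replacement the bulk integration returns the partition function $K_{N-m}(\tau)$ (the constraint $\mathrm{Re}\,\lambda_j\leq x$ is free since $E_\tau\subset\{u\leq 1+\tau\}\subset\{u\leq x\}$), and the ratio $K_{N-m}(\tau)/K_N(\tau)$ contributes only subexponentially via~\eqref{K}. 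Combining with the Gaussian factor on the top coordinates and the tail $\mathrm{erfc}(z)\sim e^{-z^2}/(z\sqrt\pi)$, a short computation identifies the exponential order of the resulting top-coordinate integral as
\[
\exp\!\Bigl(N\sum_{j=1}^m\Psi_\tau(\mathrm{Re}\,\lambda_j,\mathrm{Im}\,\lambda_j)\Bigr),
\]
where $\Psi_\tau$ is the function of Corollary~\ref{logpotineq}.

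The variational problem is then resolved by that corollary: on $\{u\geq x\}$, $\Psi_\tau$ is maximized at $(x,0)$ with value $-I_\tau(x)-\tfrac12$. So the optimal top configuration places all $m$ eigenvalues at $x$ on the real axis, producing exponential rate $mI_\tau(x)$; the additive $-m/2$ is absorbed into the normalization ratio $K_{N-m}/K_N$. This yields the LDP upper bound on $[x,\infty)$. The matching lower bound follows by restricting the integration to a small real neighborhood of the saddle, where the Vandermonde and the bulk estimates both remain controlled from below.

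The principal technical obstacle is making the replacement of the cross term by $\phi_\tau$ quantitative and uniform down to $x=1+\tau$: the kernel $w\mapsto\log|\lambda_i-w|$ has a logarithmic singularity and the top eigenvalue sits right at the boundary of the bulk support $E_\tau$ when $x\approx 1+\tau$. The plan is to handle this by truncating $\log|\cdot|$ at scale $N^{-\alpha}$ and combining it with an \emph{a priori} bound on the probability that bulk eigenvalues stray far outside $E_\tau$, supplemented by Lemma~\ref{logpot}(2)--(3) to control the derivatives of $\phi_\tau$ near the boundary. Goodness of $mI_\tau$ is immediate from its explicit formula (continuity on $[1+\tau,\infty)$, zero at $1+\tau$, superlinear growth at infinity).
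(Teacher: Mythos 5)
Your strategy is essentially the paper's: isolate the top $m$ eigenvalues, write the Vandermonde cross term as $(N-m)\sum_{i\le m}\int\log|\lambda_i-w|\,d\widehat\mu^{\mathrm{bulk}}(w)$, use the speed-$N^2$ LDP for the bulk empirical measure to replace $\widehat\mu^{\mathrm{bulk}}$ by $U_\tau$, and optimize $\Psi_\tau$ via Corollary~\ref{logpotineq}. The one place where your outline is actually wrong, rather than merely terse, is the normalization bookkeeping. The ratio $K_{N-m}(\tau)/K_N(\tau)$ is very far from subexponential: from \eqref{K} it contains $N^{\binom{N-m+1}{2}/2-\binom{N+1}{2}/2}\prod_{j=N-m+1}^{N}\Gamma(j/2)^{-1}$, which decays like $e^{-cmN\log N}$. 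You also assert, two sentences later, that this same ratio absorbs the additive $-m/2$ in $m\Psi_\tau(x,0)=-mI_\tau(x)-m/2$, i.e.\ contributes $e^{mN/2}$ at exponential order; both claims cannot hold. The paper resolves this by the change of variables $\tilde\lambda_j=\sqrt{N/(N-m)}\,\lambda_j$, which is needed for two reasons: without it the bulk integral does \emph{not} return $K_{N-m}(\tau)$ (the Gaussian and erfc weights carry $N$, not $N-m$), and its Jacobian $\left(\tfrac{N-m}{N}\right)^{N(N+1)/4}$ is precisely what combines with $K_{N-m}(\tau)/K_N(\tau)$ to give the exponential rate $m/2$ of \eqref{C-limalt}, cancelling the $-m/2$. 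As written, your computation would not close; you need this rescaling (or an equivalent device) made explicit.

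Two smaller omissions. First, the theorem asserts rate $+\infty$ on $[0,1+\tau)$, and an LDP upper bound for closed sets meeting that region is part of the statement; it follows quickly from the bulk LDP, since $\mathrm{Re}\,\lambda_m<x<1+\tau$ forces $L_N(\{z: x\le\mathrm{Re}\,z<1+\tau\})\le (m-1)/N$ while $U_\tau$ gives that strip positive mass, so the probability is $O(e^{-\kappa N^2})$ --- but you never address it. Second, your saddle-point analysis works on a compact window $[x,M]$; to get the upper bound on unbounded closed sets you need exponential tightness in both $|\mathrm{Re}\,\lambda_1|$ and $\max_j\mathrm{Im}\,\lambda_j$ (the paper's Lemma~\ref{tightness}); the erfc tail controls the imaginary parts but the real parts require the Gaussian-versus-Vandermonde estimate to be carried out. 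Finally, the edge singularity you identify as the principal obstacle is a non-issue in the paper's scheme: for the upper bound one only needs $\Psi$ bounded above on the relevant compact set (a crude $(4\sqrt{2}M)^{m(N-m)}$ bound suffices on the $e^{-cN^2}$-unlikely event that the bulk measure leaves a weak neighborhood of $U_\tau$, and upper semicontinuity of $\Psi$ handles the rest), and the endpoint $x=1+\tau$ is recovered by continuity of $I_\tau$; your proposed $N^{-\alpha}$ truncation is unnecessary extra machinery.
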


In order to prove Theorem \ref{largedev}, we will first need an exponential tightness result:

\begin{lemma}[Exponential tightness from the right]\label{tightness} The following limit holds:\begin{equation*}
	\lim_{M \rightarrow \infty} \lim_{N \rightarrow \infty} \frac{1}{N} \log \Pa_N \left( |\textnormal{Re } \lambda_1(X) | \geq M \textnormal{ or } \max_j \Ima \lambda_j(X)  > M \right) = - \infty.
	\end{equation*}
\end{lemma}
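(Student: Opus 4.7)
The plan is to reduce the event in question to one about the operator norm $\|X\|_\mathrm{op}$ and then apply Gaussian concentration. For any (real or complex) eigenvalue $\lambda$ of $X$ with eigenvector $v \neq 0$, one has $|\lambda|\|v\| = \|Xv\| \leq \|X\|_\mathrm{op}\|v\|$, so $\max_j |\lambda_j(X)| \leq \|X\|_\mathrm{op}$. In particular $|\textnormal{Re } \lambda_1|$ and every $|\Ima \lambda_j|$ are bounded by $\|X\|_\mathrm{op}$, whence
\[
\Pa_N\!\bigl(|\textnormal{Re } \lambda_1(X)| \geq M \text{ or } \max_j \Ima \lambda_j(X) > M\bigr) \leq \Pa_N(\|X\|_\mathrm{op} \geq M),
\]
and it suffices to show $\frac{1}{N}\log \Pa_N(\|X\|_\mathrm{op}\geq M) \to -\infty$ as $N \to \infty$ then $M \to \infty$.

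For the concentration step, view $(X_{ij})$ as a centered Gaussian vector in $\R^{N^2}$ with covariance $\frac{1}{N}(I + \tau P)$, where $P$ is the involution on $\R^{N^2}$ that exchanges the $(i,j)$ and $(j,i)$ coordinates. Since $P$ has spectrum $\{\pm 1\}$, this covariance has operator norm $(1+|\tau|)/N$. The map $Y \mapsto \|Y\|_\mathrm{op}$ on $N\times N$ matrices is $1$-Lipschitz with respect to the Frobenius norm, so after diagonalizing the covariance, Borell's inequality yields
\[
\Pa_N\!\bigl(\|X\|_\mathrm{op} \geq \E\|X\|_\mathrm{op} + t\bigr) \leq \exp\!\left(-\frac{Nt^2}{2(1+|\tau|)}\right).
\]

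To close the argument, $\E\|X\|_\mathrm{op}$ must be bounded uniformly in $N$. This follows from the decomposition $X = \sqrt{(1+\tau)/2}\, S + \sqrt{(1-\tau)/2}\, A$, where $S$ is a symmetric Gaussian (GOE-type) matrix and $A$ is an antisymmetric Gaussian matrix, each with entries of variance $1/N$; both $\E\|S\|_\mathrm{op}$ and $\E\|A\|_\mathrm{op}$ converge to finite constants by classical estimates. Calling a uniform upper bound $C_0$, for $M > C_0$ one obtains
\[
\frac{1}{N}\log \Pa_N(\|X\|_\mathrm{op} \geq M) \leq -\frac{(M-C_0)^2}{2(1+|\tau|)} + o(1),
\]
which tends to $-\infty$ as $M \to \infty$. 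The main point of care is the uniform bound on $\E\|X\|_\mathrm{op}$, which is standard but relies on classical norm estimates for the Gaussian orthogonal and antisymmetric ensembles. An alternative route, more in the spirit of Section 3, is to integrate the joint density against the indicator of the bad region, using $\textnormal{erfc}(t) \leq e^{-t^2}$ to tame the imaginary-part tail and a Vandermonde bound such as $|\Delta(\lambda)|^2 \leq 2^{N(N-1)/2}\prod_i(1+|\lambda_i|^2)^{N-1}$; the technical hurdle there is bookkeeping the $N^2$-scale factors against the partition function $K_N(\tau)$.
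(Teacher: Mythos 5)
Your argument is correct, and it takes a genuinely different route from the paper. The paper works directly with the Lehmann--Sommers joint eigenvalue density on each stratum $S_k$: it isolates the offending eigenvalue (and its conjugate) via pointwise inequalities for the $\textnormal{erfc}$ factor and the Vandermonde determinant, recognizes the remaining factors as the $(N-2)$-dimensional density, and reads off the Gaussian tail $e^{-cNM^2}$ from the leftover two-dimensional integral, with careful bookkeeping of the ratio $K_{N-2}(\tau)/K_N(\tau)$. You instead dominate every eigenvalue by $\|X\|_\mathrm{op}$ and apply Gaussian concentration (Borell--TIS) to the $1$-Lipschitz function $Y \mapsto \|Y\|_\mathrm{op}$ of the Gaussian vector $(X_{ij})$, whose covariance $\frac{1}{N}(I+\tau P)$ indeed has operator norm $(1+|\tau|)/N$ since $P$ is a symmetric involution; the symmetric/antisymmetric split $X = \sqrt{(1+\tau)/2}\,S + \sqrt{(1-\tau)/2}\,A$ correctly matches the covariance structure and reduces the uniform bound on $\E\|X\|_\mathrm{op}$ to classical norm estimates. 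Your route is shorter, avoids the $\textnormal{erfc}$ and Vandermonde manipulations entirely, and is more robust (it uses only Gaussianity of the entries, not the exact solvability of the ensemble); the paper's route stays inside the eigenvalue-density formalism that is reused verbatim in the LDP proof later in the same section and needs no external concentration input. Two small points to tighten: the conclusion you actually prove is a bound on $\limsup_N$, which is all that exponential tightness requires (the paper's statement is equally loose on this); and you should cite a specific uniform-in-$N$ bound for $\E\|S\|_\mathrm{op}$ and $\E\|A\|_\mathrm{op}$ (e.g., the standard non-asymptotic bounds for Wigner-type Gaussian matrices, or a Sudakov--Fernique comparison giving $\E\sup_{\|u\|=\|v\|=1}\langle Xu,v\rangle \leq C$ directly), since that is the one input your argument cannot do without.
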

\begin{proof}We shall only prove \begin{equation*}
	\lim_{M \rightarrow \infty}\lim_{N \rightarrow \infty} \frac{1}{N} \log \Pa_N \left( \max_i \Ima \lambda_i > M \right) = - \infty
	\end{equation*} since an analogous argument will provide a proof for the statement involving $|\textnormal{Re } \lambda_1|$.  We shall make use of the following four inequalities: \begin{enumerate} \item There exists $\alpha > 0$ such that for $|z|$ for sufficient large and $w \in \C$,  \begin{equation*}\label{exp}|z - w||\bar{z} - w| \exp \left[ -\frac{|w|^2}{1+|\tau|} \right] \leq (|z| + | w|)^2 \exp \left[ -\frac{|w|^2}{1+|\tau|}\right] \leq \alpha^2 \exp \left[\frac{|z|^2}{2(1+|\tau|)} \right]. \end{equation*} 
		
		\item For any $z = x + iy \in \C$, \begin{equation}\label{erfc} \exp \left[ -\frac{N}{2(1+\tau)} (z^2 + \bar{z}^2) \right] \textnormal{erfc} \left[ \sqrt{\frac{2N}{1-\tau^2}} |y| \right] \leq \frac{\exp \left[ \frac{-N|z|^2}{1-\tau} \right]}{ \sqrt{ \frac{2N}{1-\tau^2}y^2} + \sqrt{ \frac{2N}{1-\tau^2}y^2 + \frac{4}{\pi}} } \leq \exp \left[ \frac{-N|z|^2}{1+|\tau|} \right].\end{equation}
		
		\item For any $y \in \R$, \begin{equation*}\label{erfc2}
		\textnormal{erfc} \left[ \sqrt{ \frac{2N}{1-\tau^2}} |y| \right] \leq \sqrt{\pi}\textnormal{erfc} \left[ \sqrt{ \frac{2(N-2)}{1-\tau^2}} | y| \right] \textnormal{erfc} \left[ \sqrt{ \frac{4}{1-\tau^2}} |y| \right] \left( \sqrt{ \frac{4y^2}{1-\tau^2}} + \sqrt{ \frac{4y^2}{1-\tau^2} + 2} \right)
		\end{equation*}
		
		\item There exists a $C$ such that for all $y \in \R$, \begin{equation*}
		\sqrt{\pi}\frac{\sqrt{ \frac{4y^2}{1-\tau^2}} + \sqrt{ \frac{4y^2}{1-\tau^2} + 2}}{\sqrt{ \frac{4}{1-\tau^2}y^2} + \sqrt{ \frac{4}{1-\tau^2}y^2 + \frac{4}{\pi}}} \leq C^{-2}
		\end{equation*}
		
	\end{enumerate}
	We shall establish an inequality for $\Pa_{N,k}( |\Ima \lambda_l| > M)$ for an arbitrary $l$ since $$\Pa_{N} \left( \max_j \Ima \lambda_j > M \right) \leq \sum_k \sum_l \Pa_{N,k}( |\Ima \lambda_l| > M).$$ We proceed as follows:
	\begin{align*}
	\Pa_{N,k}( |\Ima \lambda_l| > M) &= \frac{1}{K_N(\tau)} \int \exp \left( - \frac{N}{2(1+\tau)} \sum_{j=1}^m \lambda_j^2 \right) \prod_{j=1}^N \sqrt{\textnormal{erfc} \left( \sqrt{\frac{2N}{1+\tau}} |\Ima \lambda_j| \right)} |\Delta(\lambda)| \, d \mu^{(N,k)} \\
	&\leq \frac{\alpha^{N+k-2}}{C^{N-k-2}} \frac{K_{N-2}(\tau)}{K_N(\tau)} \int_{-\infty}^{\infty} \int_{M}^{\infty} 2y_l\exp \left( -\frac{(N-k+2)(x^2_l + y^2_l)}{2(1+|\tau|)} \right) \Pa_{N-2,k} (S_k) \, dx_l dy_l
	\end{align*}
	where the first integral is over $\{ \Ima \lambda_l > M\}$ and the quantities $S_k$, $\Pa_{N,k}$ and $\mu^{(N,k)}$ are as in {\sc Lemma} \ref{SK}. The inequality follows from the fact that we can split the exponential by $$\exp \left( -\frac{N}{2(1+\tau)}\lambda^2_j \right) = \exp \left( -\frac{N-2}{2(1+\tau)}\lambda^2_j \right) \exp \left( -\frac{2}{2(1+\tau)}\lambda^2_j \right) $$ and use inequality (3) to split the erfc term into three parts, whose first factor coupled with the first factor of the exponential and the appropiate factors from the Vandermonde polynomial yields the density for $\Pa_{N-2,k}$. We then use inequality (2) with $z = \sqrt{ \frac{2}{N}} \lambda_j$ for $j=1,...,N$  to get rid of our remaining erfc factors. This also allows us to bound the term coming from the last factor on the right hand side of inequality (3) by $C^2$. We then use inequality (1) to bound the remaining terms from the Vandermonde polynomial with the exception of $|\lambda_l - \bar{\lambda}_l|$, which is equal to $2 y_l$. Finally, we invoke inequality (2) again to deal with the remaining erfc factors involving $\lambda_l$. After some simplification, the inequality follows. Without loss of generality, we can assume $C \leq 1$ and hence $C^{N-k-2} \geq C^{N-2}$ and $\alpha^{N+k-2} \geq \alpha^{2N-2}$. We can sum over $l$ then over $k$ to obtain: \begin{align*}\Pa_N \left( \max_j \Ima \lambda_j(X) > M \right) &\leq C^{2-N} \alpha^{2N-2}\frac{K_{N-2}(\tau)}{K_N(\tau)} N^2\int_{-\infty}^{\infty} \int_M^{\infty} 2y e^{-\frac{(N+2)}{2(1+|\tau|)}(x^2 + y^2)} dy dx \\
	&\leq \frac{2(1+|\tau|)}{N-k+2} C^{2-N}\alpha^{2N-2}\frac{K_{N-2}(\tau)}{K_N(\tau)} N^2 e^{- \frac{N-k+2}{2(1+|\tau|)}M^2}\int_{-\infty}^{\infty}  e^{-\frac{(N-k+2)}{2(1+|\tau|)}x^2} dx \\
	&= \sqrt{\frac{2\pi(1+|\tau|)}{N-k+2}} C^{2-N}\alpha^{2N-2} \frac{K_{N-2}(\tau)}{K_N(\tau)} N^2 e^{-\frac{N-k+2}{2(1+|\tau|)}M^2}.
	\end{align*}
	where $N^2$ factors comes from the fact that the number of terms is bounded by $N^2$. The final exponential term will yield the desired asymptotics.
\end{proof}
Finally, we'll need the two more preliminary results.

(a) Define $\mathcal{P}(\C)$ to be the space of probability measures on $\C$ which are invariant under complex conjugation endowed with a metric compatible with the usual weak convergence of measures. The empirical measure of the eigenvalues $L_N = \frac{1}{N} \sum_{j=1}^N \delta_{\lambda_j(X)}$ with respect to $\Pa_N$ satisfies an LDP of speed $N^2$ on $\mathcal{P}(\C)$ whose rate function is minimized at the uniform distribution $U_{\tau}$ on the ellipse $E_{\tau}$. The proof of this LDP for the case $\tau = 0$ can be found in Ben Arous and Zeitouni~\cite{BGZ} but the same argument extends to $\tau \in (-1,1).$

(b) The functional
\begin{equation*}
\Psi(\mu,x+iy) = \int_{\C} \log |x + iy - z| d \mu(z) - \frac{x^2}{2(1+\tau)} - \frac{y^2}{2(1-\tau)} 
\end{equation*}
defined on $\mathcal{P}(\C) \times \C$ is upper-semicontinuous when restricted to $\mathcal{P}(B_M) \times B_M$ for any $M > 0$, and in fact is continuous when restricted to $\mathscr{P}(B_M) \times \left( B_M \cap \{ z : \textnormal{Re } z > x \} \right)$ for any $x > 1 + \tau$.  The distribution $U_{\tau}$ is related to the rate function $I_{\tau}$ by 
\begin{equation}\label{ratefunc}
I_{\tau}(x) = -\Psi(U_{\tau},x)-\frac{1}{2}
\end{equation}as per (1) of {\sc Lemma} \ref{logpot} since $\Psi(U_{\tau},x) = \Psi_{\tau}(x,0)$. 

With these preparations in order, we can begin the proof of {\sc Theorem} \ref{largedev}.

\begin{proof}[Proof of Theorem \ref{largedev}]
	It is obvious that $I_{\tau}$ is a good rate function.  Our theorem will follow if we can prove the following equalities:
	
	\begin{enumerate}
		\item $\lim_{N \rightarrow \infty} \frac{1}{N} \log \Pa_N( \lambda_{m} \in [0,x)) = -\infty$ for $0< x < 1+\tau$.
		\item $\lim_{N \rightarrow \infty} \frac{1}{N} \log \Pa_N( \lambda_{m} \in [x, \infty)) = -m I_{\tau}(x)$ for $x > 1+\tau$.
	\end{enumerate}
	
	To prove the  first equality, we note that by definition, $\textnormal{Re }\lambda_m(X) < x$ for some $x < 1+\tau$ is equivalent to $L_N(z : x \leq \textnormal{Re }z < 1+\tau) \leq \frac{m-1}{N}$. Since $\mu_{\tau}[x,1+\tau) > 0$, there exist constants $K,\kappa > 0$ such that if $x < 1 + \tau$ then 
	$$\Pa_N( \textnormal{Re } \lambda_m(X) < x) \leq \Pa_N \left(L_N( z: x \leq \text{ Re } z < 1 + \tau)  \leq \frac{m-1}{N} \right) \leq Ke^{-\kappa N^2}.$$
	 This inequality implies the result.
	
	We now turn to the proof of the second equality. In view of {\sc Lemma} \ref{tightness}, the second equality is equivalent to the following equality for sufficiently large $M$ satisfying $1 + \tau < x < M$:\begin{align*}\lim_{N \rightarrow \infty} \frac{1}{N} \log \Pa_N \left( \lambda_m(X) \in [x,M], |\textnormal{Re } \lambda_1(X)| \leq M, \max_j\Ima \lambda_j(X) \leq M \right) = -mI_{\tau}(x).
	\end{align*}
	
	We will estimate this by decomposing $\Pa_{N}$ into a sum of the $\Pa_{N,k}$ for admissible $k$. To that end, fix $k$ for the time being, introduce new variables $\tilde{\lambda}_j = \sqrt{\frac{N}{N-m}} \lambda_j \textnormal{ for } 1 \leq j \leq N$. On the larger set of $$\left \{ \tilde{\lambda}_m \in [x,2M], \max_j \left( \textnormal{Im }\tilde{\lambda}_j(X), |\textnormal{Re } \tilde{\lambda}_1|\right) \leq 2M \right \} \supset \left \{ \lambda_m(X) \in [x,M], \max_j \left(\Ima \lambda_j(X), |\textnormal{Re } \lambda_1(X)| \right) \leq M \right \},$$ we have the $|\tilde{\lambda}_j - \tilde{\lambda}_i| \leq 4 \sqrt{2}M$ and hence
	
	\begin{align*}
	\Pa_{N,k}(d \lambda) &= \frac{1}{K_N(\tau)} | \Delta(\lambda)| \exp \left(-\frac{N}{2(1+\tau)}\sum_{j=1}^N \lambda_j^2 \right) \prod_{j=1}^m \sqrt{\textnormal{erfc} \left( \sqrt{\frac{2N}{1-\tau^2}} |\Ima \lambda_j| \right)} d \mu^{(N,k)} \\
	&=   \frac{K_{N-m}(\tau)}{K_N(\tau)}\sum_{l} \prod_{1 \leq i < j \leq m} | \tilde{\lambda}_j - \tilde{\lambda}_i|  \prod_{j=1}^m e^{-\frac{N-m}{2(1+\tau)} \tilde{\lambda}^2_j} \sqrt{\textnormal{erfc}\left(\sqrt{\frac{2(N-m)}{1-\tau^2}} |\Ima \tilde{ \lambda}_j|\right)} \, \mu^{(m,l)}(d \tilde{\lambda}_1,...,d\tilde{\lambda}_m) \times \\
	& \qquad \qquad \qquad \frac{(k-l)! \left( \frac{N-m+l-k}{2} \right)!}{k!\left(\frac{N-k}{2}\right)!}\left(\frac{N-m}{N}\right)^{\frac{N(N+1)}{4}}  \prod_{i=1}^m \prod_{j=m+1}^{N} |\tilde{\lambda}_i - \tilde{\lambda}_j|  \Pa_{N-m,k-l}(d\tilde{\lambda}_{m+1},...,d\tilde{\lambda}_N) \\ 
	&\leq (4\sqrt{2}M)^{\frac{m(m-1)}{2}}  \frac{K_{N-m}(\tau)}{K_N(\tau)} \left(\frac{N-m}{N}\right)^{\frac{N(N+1)}{4}}\sum_l \frac{(k-l)! \left( \frac{N-m+l-k}{2} \right)!}{k!\left(\frac{N-k}{2}\right)!} \mu^{(m,l)}(d \tilde{\lambda}_1,..., d \tilde{\lambda}_m)  \times \\
	& \qquad \qquad \qquad \qquad \qquad \qquad  \exp \left[  (N-m) \sum_{i=1}^m \Psi(\tilde{L}_{N-m},\tilde{\lambda}_i) \right]\Pa_{N-m,k-l}( d \tilde{\lambda}_{m+1},..., d \tilde{\lambda}_{N})  
	\end{align*}
	where 
	\begin{equation*}
	\tilde{L}_{N-m} = \frac{1}{N-m} \sum_{j=m+1}^{N} \delta_{\tilde{\lambda}_j(X)}.
	\end{equation*}
	The $1 / (k! (\frac{N-k}{2})!)$ factor arises from removing the ordering on the eigenvalues by real parts and the $(k-l)! (\frac{N-m+l-k}{2})!$ factor comes from ordering the last $N-m$ eigenvalues by real parts with the assumption that $k-l$ of them are real.  For $\epsilon > 0$, let $\mathbb{B}_{\epsilon} \subset \mathscr{P}(B_M)$ be the ball of radius $\epsilon$ around $U_{\tau}$ and $\mathbb{B}^{c}_{\epsilon}$ its complement. Since on the set $\{ \tilde{\lambda}_m \in [x,2M], \max_j \textnormal{Im }\tilde{\lambda}_j(X) \leq 2M \}$, we have 
	
	\begin{equation*}\exp \left[(N-m) \sum_{i=1}^m \Psi(\tilde{L}_{N-m}, \tilde{\lambda}_i) \right] \leq (4 \sqrt{2}M)^{m(N-m)},\end{equation*} we can further bound the exponential factor according by whether $\tilde{L}_{N-m}$ is in $\mathbb{B}_{\epsilon}$ or not, i.e, $$ \exp \left[ (N-m) \sum_{i=1}^m \Psi(\tilde{L}_{N-m}, \lambda_i) \right]\leq \exp \left[ m(N-m)\sup_{\mu \in \mathbb{B}_{\epsilon}, x \leq \textnormal{Re } z \leq M} \Psi(\mu,z) \right] + (4\sqrt{2}M)^{m(N-m)} \mathbbm{1}_{\mathbb{B}^{c}_{\epsilon}}(\tilde{L}_{N-m}).$$
	We can now perform the integral with respect to $\tilde{\lambda}_1,... \, ,\tilde{\lambda}_m$. Since we have removed all appearances of the variables of integration from the integrand and we are integrating over a compact region, the integral is finite for any $l$, and since $l$ belongs to a finite set, we can bound it by a constant $\gamma$ independent of $N$, $l$, and $k$. Since $\Pa_{N-m,k-l}(A) \leq \Pa_{N-m}(A)$ for any set $A$, we can also replace $\Pa_{N-m,k-l}$ by $\Pa_{N-m}$.
	
	We continue by summing over admissible $l$. Since we've eliminated any dependencies on $l$, we can replace the sum over $l$ by a factor of $k$.  By integrating over the remaining portion of our domain of integration and using the fact that $$\frac{(k-l)! \left( \frac{N-m+l-k}{2} \right)!}{k!\left(\frac{N-k}{2}\right)!} \leq 1,$$ we obtain the following upper bound: \begin{align*}
	&\Pa_{N,k} \left( \lambda_m \in [x,M], |\textnormal{Re } \lambda_1| \leq M, \max_j \Ima \lambda_j \leq M \right)\leq k\gamma \frac{K_{N-m}(\tau)}{K_N(\tau)} \left(\frac{N-m}{N}\right)^{\frac{N(N+1)}{4}} \times \\ &(4\sqrt{2}M)^{m(m-1)/2}\left( \exp \left[ m(N-m)\sup_{\mu \in \mathbb{B}_{\epsilon}, x \leq \textnormal{Re } z \leq M} \Psi(\mu,z) \right] + (4\sqrt{2}M)^{m(N-m)} \Pa_{N-m}(\tilde{L}_{N-m} \in \mathbb{B}^{c}_{\epsilon}) \right).
	\end{align*} The only dependence on $k$ on the right hand side comes from the factor $k$, so when we sum over $k$ we can just replace it with $N(N-1)/2$. At this we point, we make the following two observations: the first is that $\tilde{L}_{N-m}$ satisfies the same LDP as $L_{N}$. This implies that there exists $c > 0$ such that $$\Pa_{N-m}(\tilde{L}_{N-m} \notin \mathbb{B}_{\epsilon}) \leq e^{-cN^2}$$ hence the second term on the right hand side of our upper bound is negligible in the limit. The second observation is that from (\ref{K}) we have
	\begin{equation}\label{C-limalt}
	\lim_{N \rightarrow \infty} \frac{1}{N} \log \left[ \frac{K_{N-m}(\tau)}{K_N(\tau)} \left(\frac{N-m}{N}\right)^{\frac{N(N+1)}{4}} \right] = \frac{m}{2}.
	\end{equation}
	We use these two observations to establish the following inequality: \begin{equation*} 
	\limsup_{N \rightarrow \infty} \frac{1}{N} \log \Pa_{N} \left( \lambda_m \in [x,M], |\textnormal{Re } \lambda_1| \leq M, \max_j \Ima \lambda_j \leq M \right) \leq \frac{m}{2} + m\lim_{\epsilon \downarrow 0} \sup_{\mu \in \mathbb{B}_{\epsilon}, x \leq \textnormal{Re } z < M} \Psi(\mu,z).
	\end{equation*}
	The second term on the right hand side can by computed explicitly, $$\lim_{\epsilon \downarrow 0} \sup_{\mu \in \mathbb{B}_{\epsilon}, x \leq \textnormal{Re } z \leq M} \Psi(\mu,z) = \Psi(U_{\tau},x) = -I_{\tau}(x) - \frac{1}{2}$$ where the first equality follows from upper semi-continuity of $\Psi$ and {\sc Corollary}  \ref{logpotineq} and the second equality follows from (\ref{ratefunc}). This proves the upper bound for the equality (2) stated at the beginning of the proof.
	
	To obtain the lower bound, we fix $y > x > r > 1 + \tau$ and $\epsilon, \delta > 0$. We first need a lower bound analogous to (\ref{erfc}). We can obtain one if we restrict ourselves to $|\Ima z| \leq \delta$ and $N$ large enough: \begin{equation*} \exp \left[ -\frac{N-m}{2(1+\tau)} (z^2 + \bar{z}^2) \right] \textnormal{erfc} \left[ \sqrt{\frac{2(N-m)}{1-\tau^2}} |\Ima z| \right]  \geq \frac{\beta(\delta)}{\sqrt{N}} \exp \left[ \frac{-(N-m)|z|^2}{1 - \tau} \right]\end{equation*}for some positive constant $\beta(\delta) < 1$. Retaining the previous notation from the upper bound, we further define $\mathbb{B}_{\epsilon} \cap \mathscr{P}(B_r)$ to mean the set of measures in $\mathbb{B}_{\epsilon}$ whose support is contained in the ball $B_r \subset \C$ of radius $r$.  On the set  
	
	\begin{align*}
	\left \{ \tilde{\lambda}_m(X) \in \left[ \sqrt{\frac{N}{N-m}}x,y \right], \textnormal{Im }\tilde{\lambda}_j(X)\leq  \delta \, , |\textnormal{Re } \lambda_1(X)| \leq y, |\lambda_j(X)| \leq r \, \forall j \right \} 
	\end{align*}
	which is a subset of $\left \{ \lambda_m(X) \in [x,M], \max_j \Ima \lambda_j(X),|\textnormal{Re } \lambda_1(X)| \leq M \right \}$,
	we can bound the density as follows:

	\begin{align*}
	\Pa_{N,k}(d \lambda) &= \frac{K_{N-m}(\tau)}{K_N(\tau)} \sum_{l} \prod_{1 \leq i < j \leq m} | \tilde{\lambda}_j - \tilde{\lambda}_i|  \prod_{j=1}^m e^{-\frac{N-m}{2(1+\tau)} \tilde{\lambda}^2_j} \sqrt{\textnormal{erfc}\left(\sqrt{\frac{2(N-m)}{1-\tau^2}} |\Ima \tilde{ \lambda}_j|\right)} \, \mu^{(m,l)}(d \tilde{\lambda}_1,...,d\tilde{\lambda}_m) \times \\
	& \qquad \qquad \qquad \frac{(k-l)! \left( \frac{N-m+l-k}{2} \right)!}{k!\left(\frac{N-k}{2}\right)!}\left(\frac{N-m}{N}\right)^{\frac{N(N+1)}{4}}  \prod_{i=1}^m \prod_{j=m+1}^{N} |\tilde{\lambda}_i - \tilde{\lambda}_j|  \Pa_{N-m,k-l}(d\tilde{\lambda}_{m+1},...,d\tilde{\lambda}_N) \\ 
	&\geq \sum_l \left(\frac{\beta(\delta)}{\sqrt{N}} \right)^{\frac{m-l}{2}} \left(\frac{N-m}{N}\right)^{\frac{N(N+1)}{4}} \frac{(k-l)! \left( \frac{N-m+l-k}{2} \right)!}{k!\left(\frac{N-k}{2}\right)!} \prod_{1 \leq i < j \leq m} |\tilde{\lambda}_j - \tilde{\lambda}_i| \,  \mu^{(m,l)}(d \tilde{\lambda}_1,..., d \tilde{\lambda}_m)  \times \\
	&  \frac{K_{N-m}(\tau)}{K_N(\tau)} \mathbbm{1}_{\mathbb{B}_{\epsilon} \cap \mathcal{P}(B_r)}(\tilde{L}_{N-m}) \exp \left[  m(N-m) \inf_{ \substack{ \mu \in \mathbb{B}_{\epsilon} \cap \mathscr{P}(B_r) \\ x \leq \textnormal{Re }z \leq y,|\Ima z| < \delta}}  \Psi(\mu,z) \right]\Pa_{N-m,k-l}(d \tilde{\lambda}_{m+1},...,d \tilde{\lambda}_N) \\  
	\end{align*}
	We now point out two quantities which will end up becoming negligible in the limit. We first proceed by integrating out the $\tilde{\lambda}_1,... \, , \tilde{\lambda}_m$ variables which yields a finite quantity since we are integrating over a bounded region and the integrand is bounded. Moreover, this quantity is bounded both from above and from below by constants independent of $N$ so this term will be neglible in the limit.  The second quantity which is also negligible in the limit is the one appearing in the following limit which holds for $l \leq m$ and all $k$: \begin{equation}\label{C-lim}
	\lim_{N \rightarrow \infty} \frac{1}{N} \log \left[ \frac{(k-l)! \left( \frac{N-m+l-k}{2} \right)!}{k!\left(\frac{N-k}{2}\right)!} \right] = 0.
	\end{equation}  Since the inequality $$\sum_k \sum_l \Pa_{N-m,k-l}(A) \geq \Pa_{N-m}(A)$$ is true for any Borel set $A$, we can use (\ref{C-limalt}) and (\ref{C-lim}) to obtain the following lower bound:
	\begin{align*} \liminf_{N \rightarrow \infty}\frac{1}{N} \log \Pa_N \left(\lambda_m \in [x,M] \right) \geq \frac{m}{2} + m \lim_{\epsilon \downarrow 0} \inf_{ \substack{ \mu \in \mathbb{B}_{\epsilon} \cap \mathscr{P}(B_r) \\ x \leq \textnormal{Re }z \leq y,|\Ima z| < \delta}}  \Psi(\mu,z)
	\end{align*}
	By continuity of $\Psi$ and {\sc Corollary} \ref{logpotineq}, we obtain: $$\lim_{\epsilon \downarrow 0} \inf_{ \substack{ \mu \in \mathbb{B}_{\epsilon} \cap \mathscr{P}(B_r) \\ x \leq \textnormal{Re }z \leq y,|\Ima z| < \delta}}  \Psi(\mu,z) = \Psi(U_{\tau}, y + \delta i).$$ Finally, we take $\delta \rightarrow 0$ then $y \rightarrow x$  and use the continuity of $\Psi$ coupled with (\ref{ratefunc}) to obtain the desired lower bound for equality (2) stated at the beginning of the proof.
	
\end{proof}

\section{Expected number of critical points and the Gaussian Elliptic Ensemble}

 We now relate $\mathcal{N}_{m}$ to the eigenvalue with $m$th largest real part of an $N \times N$ GEE matrix. \begin{theorem}\label{eq:aux}For a Borel set $B \subset \R$, we have:
	\begin{equation*}
	\E\mathcal{N}_{m}(B) = 2\sqrt{\frac{1+\tau}{b^2+\tau}} b^{1-N} \E_N \left[ \exp \left( -\frac{N(1-b^2)}{2(b^2 + \tau)(1+\tau)} \lambda_m^2(X) \right) \mathbbm{1}_{B} \left(\sqrt{\Phi'_1(1)}\lambda_m(X) \right) \right].
	\end{equation*}
\end{theorem}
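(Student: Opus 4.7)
The approach is via the Kac--Rice formula applied to the tangent vector field on $S^{N-1}(\sqrt{N})$. Writing $\ppartial F$ for the tangential derivative of $F$, this reads
\[
\E\mathcal{N}_m(B) = \int_{S^{N-1}(\sqrt{N})} \E\bigl[|\det\ppartial F(x)|\,\mathbbm{1}\{m\text{ unstable}\}\,\mathbbm{1}_B(\lambda(x)) \,\big|\, F(x)=0\bigr]\, p_{F(x)}(0)\, d\vol(x).
\]
By rotational invariance of the covariance of $f$ and the law of $h$, the integrand is independent of $x$, so it equals $\vol(S^{N-1}(\sqrt{N}))$ times its value at the north pole $x_0 = \sqrt{N}\, e_1$.

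At $x_0$, a direct computation from the covariance kernel of $f$ and the independence of $h$ determines the joint Gaussian law of $(F(x_0), \lambda(x_0), \ppartial F(x_0))$. Three pieces emerge: the density of $F(x_0)$ at the origin contributes a factor whose leading behavior is $b^{1-N}$; the Lagrange multiplier $\lambda(x_0)=\frac{1}{N}\langle x_0, f(x_0)+h\rangle$ is Gaussian with variance involving $\sigma^2+\Phi_1(1)=b^2\Phi'_1(1)$; and conditionally on $F(x_0)=0$ and $\lambda(x_0)=t$, the tangential derivative has the form $\ppartial F(x_0)=-tI_{N-1}+\sqrt{\Phi'_1(1)}\,M$, where $M$ is an $(N-1)\times(N-1)$ GEE matrix with asymmetry $\tau=\Phi_2(1)/\Phi'_1(1)$. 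The $\Phi_2$ term in the kernel is exactly what produces the transpose-symmetric part of $M$'s covariance, and the coupling through $h$ and the radial mode of $f$ introduces the parameter $b^2+\tau$.

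With this identification, $\E\mathcal{N}_m(B)$ becomes, up to explicit Gaussian prefactors, an integral over $t\in B$ of
\[
\E_{N-1}\bigl[|\det(\sqrt{\Phi'_1(1)}M-tI)|\,\mathbbm{1}\{\text{exactly $m$ eigenvalues of $M$ have }\mathrm{Re}\geq t/\sqrt{\Phi'_1(1)}\}\bigr].
\]
The central manipulation is the Vandermonde insertion $|\Delta(\mu_1,\ldots,\mu_{N-1})|\prod_i|\mu_i-s|=|\Delta(\mu_1,\ldots,\mu_{N-1},s)|$ with $s=t/\sqrt{\Phi'_1(1)}$: together with the GEE joint eigenvalue density recalled in Section 3, this inserts $s$ as an extra real eigenvalue of an $N\times N$ matrix, and the constraint on $m$ becomes the statement that $s$ is the $m$-th largest real part in the enlarged spectrum. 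The ratio of normalization constants $K_{N-1}(\tau)/K_N(\tau)$, a missing Gaussian factor $\exp(Ns^2/(2(1+\tau)))$, and trivial erfc factors (since $s$ is real) appear cleanly in this conversion.

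Integrating over $t\leftrightarrow s$ folds the density back into an expectation over the $N\times N$ GEE. Setting $t=\sqrt{\Phi'_1(1)}\,\lambda_m(X)$ and combining the Gaussian weight on $\lambda(x_0)$ with $\exp(Ns^2/(2(1+\tau)))$ produces precisely $\exp(-\frac{N(1-b^2)}{2(b^2+\tau)(1+\tau)}\lambda_m^2(X))$, while $\vol(S^{N-1}(\sqrt{N}))$, $p_{F(x_0)}(0)$, the variance of $\lambda(x_0)$, and $K_N(\tau)/K_{N-1}(\tau)$ assemble into $2\sqrt{(1+\tau)/(b^2+\tau)}\,b^{1-N}$. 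The main obstacle I expect is precisely this final assembly: checking that all Gaussian variances, the Jacobian of the change $s\leftrightarrow t$, powers of $N$ from $K_N(\tau)$, and combinatorial constants combine exactly into the stated prefactor, and resolving the minor ordering convention that determines whether $\lambda_m$ or $\lambda_{m+1}$ appears.
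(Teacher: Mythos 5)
Your proposal follows essentially the same route as the paper: Kac--Rice plus $O(N)$-invariance to reduce to the north pole, identification of the conditioned Jacobian with a shifted $(N-1)\times(N-1)$ GEE matrix (the paper's {\sc Theorem} \ref{eq:KR}), and then the Vandermonde insertion $|\Delta(\lambda)|\prod_i|\lambda_i - s| = |\Delta(\lambda,s)|$ to absorb the shift as an extra real eigenvalue of an $N\times N$ GEE (the paper's {\sc Lemma} \ref{uppingDim}), with the constants assembling as you describe. The ordering subtlety you flag ($\lambda_m$ versus $\lambda_{m+1}$) is real and is present in the paper itself, so your sketch is correct and matches the paper's argument.
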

The proof of {\sc Theorem} \ref{eq:aux} will follow from two results. The first relates $\E \mathcal{N}_m(B)$ to a matrix integral through the Kac-Rice formula adapted to our setting. \begin{theorem}\label{eq:KR} For a matrix $A$ and nonnegative integer $m$, set $$i_m(A) = \begin{cases}  1 & \tif A \textnormal{ has exactly } m \textnormal{ eigenvalues with nonnegative real part } \\
0 & \tif \textnormal{else}\end{cases}$$then
$$
	\E \mathcal{N}_{m}(B) = \frac{2\sqrt{N-1}^{N}}{2^{N/2} \Gamma(N/2)} \frac{b^{1-N}}{\sqrt{b^2 + \tau}}\int_{-\infty}^{\infty} \mathbbm{1}_{\sqrt{\frac{N}{N-1}}B} \left( \sqrt{\Phi'_1(1)} \lambda \right) e^{-\frac{(N-1)\lambda^2}{2(b^2+\tau)}}  \E_{N-1} \left[ \left|\det \left ( X - \lambda  I \right) \right| i_m(X - \lambda I) \right] d \lambda.$$
\end{theorem}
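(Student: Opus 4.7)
The plan is a Kac-Rice argument on the sphere, combined with rotational invariance to reduce to a single-point computation, followed by a covariance calculation that identifies the Jacobian of $F$ at that point with a shifted and rescaled GEE matrix. I would apply the Kac-Rice formula to $F$ viewed as a smooth Gaussian section of the tangent bundle $TS^{N-1}(\sqrt{N})$:
\begin{equation*}
\E \mathcal{N}_m(B) = \int_{S^{N-1}(\sqrt{N})} \E\!\left[ |\det JF(x)|\, i_m(JF(x))\, \mathbbm{1}_B(\lambda(x)) \,\big|\, F(x) = 0 \right] p_{F(x)}(0)\, d\sigma(x),
\end{equation*}
where $JF(x)\colon T_xS^{N-1}(\sqrt{N}) \to T_xS^{N-1}(\sqrt{N})$ is the tangential Jacobian and $p_{F(x)}(0)$ is the density of $F(x)$ at $0$ as an element of the $(N-1)$-dimensional tangent space. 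Since the laws of $f$ and $h$ are invariant under the orthogonal action, the integrand is independent of $x$, and the integral collapses to the surface area $\tfrac{2\pi^{N/2}}{\Gamma(N/2)} N^{(N-1)/2}$ times the value at $x_0 := (\sqrt{N},0,\dots,0)$.

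Next I would compute at $x_0$ the joint law of the three Gaussian objects appearing in the Kac-Rice integrand: the tangential components $F_i(x_0) = f_i(x_0) + h_i$ for $i \in \{2,\dots,N\}$; the Lagrange multiplier $\lambda(x_0) = \tfrac{1}{\sqrt{N}}(f_1(x_0) + h_1)$; and the tangential Jacobian $M_{ai} := \partial_a f_i(x_0)$ for $a, i \in \{2,\dots,N\}$. Differentiating the covariance kernel and repeatedly using $(x_0)_i = 0$ for $i \geq 2$ shows that the three blocks are mutually independent centered Gaussians, that the tangential $F(x_0)$ has i.i.d.\ coordinates of variance $\Phi_1(1) + \sigma^2 = b^2\Phi_1'(1)$, that $\lambda(x_0)$ has variance $\Phi_1'(1)(b^2+\tau)/N$, and that
\begin{equation*}
\E[M_{ai} M_{bj}] = \tfrac{\Phi_1'(1)}{N}\delta_{ab}\delta_{ij} + \tfrac{\Phi_2(1)}{N}\delta_{aj}\delta_{bi}.
\end{equation*}
Differentiating $-\lambda(x)x$ and projecting onto $T_{x_0}S^{N-1}(\sqrt{N})$ gives $JF(x_0) = -\lambda(x_0)\, I_{N-1} + M$. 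Comparing the covariance of $M$ to $\E[X_{ai}X_{bj}] = \tfrac{1}{N-1}(\delta_{ab}\delta_{ij} + \tau\delta_{aj}\delta_{bi})$ one has $M \stackrel{d}{=} \sqrt{\Phi_1'(1)(N-1)/N}\, X$ for an $(N-1)\times(N-1)$ GEE matrix $X$ independent of $\lambda(x_0)$. Setting $\lambda := \lambda(x_0)/\sqrt{\Phi_1'(1)(N-1)/N}$ yields
\begin{equation*}
|\det JF(x_0)| = \left(\tfrac{\Phi_1'(1)(N-1)}{N}\right)^{(N-1)/2} |\det(X - \lambda I)|, \qquad i_m(JF(x_0)) = i_m(X - \lambda I).
\end{equation*}
Because the three blocks are independent the conditional expectation factors; the $\lambda(x_0)$-expectation becomes a Gaussian integral, and the change of variable from $\lambda(x_0)$ to $\lambda$ converts its density to $e^{-(N-1)\lambda^2/(2(b^2+\tau))}\, d\lambda$ up to constants and converts $\mathbbm{1}_B(\lambda(x_0))$ into $\mathbbm{1}_{\sqrt{N/(N-1)}B}(\sqrt{\Phi_1'(1)}\lambda)$, producing the stated integral against $\E_{N-1}[|\det(X-\lambda I)|\, i_m(X-\lambda I)]$.

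The main obstacle is the covariance step: verifying the mutual independence and extracting the elliptic structure of $M$ requires carrying several partial derivatives of the kernel $\delta_{ij}\Phi_1(\langle x,y\rangle/N) + \tfrac{x_jy_i}{N}\Phi_2(\langle x,y\rangle/N)$ at $x = y = x_0$ and using repeatedly that $(x_0)_i = 0$ for $i \geq 2$ to kill the unwanted terms. One must also be careful with the Lagrange multiplier contribution to $JF$, since naively $\nabla\lambda$ produces additional terms which must be shown to vanish after projection to $T_{x_0}S^{N-1}(\sqrt{N})$. Once these points are settled, assembling the sphere surface area, the Gaussian densities of $F(x_0)$ and $\lambda(x_0)$, the determinant scaling $(\Phi_1'(1)(N-1)/N)^{(N-1)/2}$, and the Jacobian of the change of variable into the prefactor $\tfrac{2\sqrt{N-1}^N}{2^{N/2}\Gamma(N/2)} \cdot \tfrac{b^{1-N}}{\sqrt{b^2+\tau}}$ is straightforward bookkeeping.
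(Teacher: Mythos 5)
Your proposal follows essentially the same route as the paper: Kac--Rice on the sphere, collapse to a single point by orthogonal invariance, a covariance computation identifying the tangential Jacobian with $-\lambda(x_0)I + (\text{scaled GEE})$ independent of the Gaussian Lagrange multiplier, and then bookkeeping of the constants. The only differences are cosmetic (choice of base point and notation), and your identifications of the variances $b^2\Phi_1'(1)$ and $\Phi_1'(1)(b^2+\tau)/N$ and of the elliptic covariance of $M$ all match the paper's assertions.
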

 The second result relates the complicated integral against $\Pa_{N-1}$ appearing in {\sc Theorem } \ref{eq:KR} to a simpler one against $\Pa_N$.
 
 \begin{lemma}\label{uppingDim}
 	For any bounded Borel measurable function $f$ on $\R$, we have \begin{align*}
 	\int_{-\infty}^{\infty}f (t \sqrt{N-1}) \exp \left(- \frac{N-1}{2(1+\tau)} t^2 \right)& \E_{N-1} \left[ \left| \det \left(X - t I \right) \right| i_m(X - t I) \right] d t	\\  &= \frac{\Gamma(N/2)\sqrt{2}^N  \sqrt{1+\tau} }{\sqrt{N-1}^{N}}\E_{N}[\mathbbm{1}_{ \R}(\lambda_{m+1}(X)) f(\sqrt{N} \cdot \lambda_{m+1}(X))].
 	\end{align*}
 \end{lemma}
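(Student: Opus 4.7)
The plan is to expand both sides using the joint eigenvalue density of the GEE displayed just before {\sc Section} 4 and match integrands after a rescaling. First I would write $|\det(X - tI)| = \prod_{j=1}^{N-1}|\mu_j - t|$ and $i_m(X - tI) = \mathbbm{1}\{|\{j : \textnormal{Re}\,\mu_j \geq t\}| = m\}$, where $\mu_j$ are the eigenvalues of $X$, and then expand $\E_{N-1}[|\det(X - tI)|i_m(X - tI)]$ as a sum over $\Pa_{N-1,k}$ indexed by the number $k$ of real eigenvalues. The key algebraic identity
\[
\prod_{j=1}^{N-1}|\mu_j - t|\cdot |\Delta(\mu_1,\ldots,\mu_{N-1})| = |\Delta(\mu_1,\ldots,\mu_{N-1}, t)|
\]
shows that, after folding in the outer factor $f(t\sqrt{N-1})\,e^{-\frac{N-1}{2(1+\tau)}t^2}$, the integrand of the LHS looks like the joint density of the eigenvalues of an $N$-dimensional GEE with $k+1$ real eigenvalues, except that the weights use $N-1$ in place of $N$. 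Since $t$ is real, the extra factor $\sqrt{\textnormal{erfc}(0)} = 1$ in the erfc product causes no loss.

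Next I rescale $t = \sqrt{N/(N-1)}\,s$ and $\mu_j = \sqrt{N/(N-1)}\,\rho_j$. This converts the Gaussian and erfc weights to their $N$-dimensional form, sends $f(t\sqrt{N-1})$ to $f(s\sqrt{N})$, and, after tracking the Jacobians from $dt$, from $d\mu$ ($N-1$ real dimensions in total), from $|\Delta(\mu)|$ ($\binom{N-1}{2}$ factors), and from $\prod_j|t - \mu_j|$ ($N-1$ factors), contributes the combined scaling $(N/(N-1))^{N(N+1)/4}$. For the RHS, expand $\E_N[\mathbbm{1}_{\R}(\lambda_{m+1})f(\sqrt{N}\lambda_{m+1})]$ via $\Pa_{N,k+1}$ summed over $k$; by the symmetry of the unordered density under permutations of the real eigenvalues, summing over which of the $k+1$ real eigenvalues is the one designated as $\lambda_{m+1}$ gives a factor $k+1$, converting $1/(k+1)!$ to $1/k!$. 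Identifying the designated eigenvalue with the rescaled variable $s$ makes the integrands coincide with those from the LHS term by term.

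Matching the prefactors reduces the lemma to the identity
\[
\frac{K_N(\tau)}{K_{N-1}(\tau)} \cdot \left(\frac{N}{N-1}\right)^{N(N+1)/4} = \frac{\Gamma(N/2)\,\sqrt{2}^N\,\sqrt{1+\tau}}{\sqrt{N-1}^N},
\]
which is a direct computation from formula \eqref{K}. The main obstacle is precisely this bookkeeping: tracking the factorial factors from the removal of the ordering among real and complex eigenvalues, the Jacobian from the simultaneous rescaling of all $N$ new variables, the powers arising from the $N$-point Vandermonde, and the ratio $K_N/K_{N-1}$, so that they combine cleanly into the advertised prefactor. The rest of the argument is essentially a careful termwise identification.
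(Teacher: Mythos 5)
Your proposal is correct and follows the paper's own proof essentially step for step: the same Vandermonde identity absorbs $|\det(X-tI)|$, the indicator $i_m$ forces $t$ to sit as the $(m+1)$th eigenvalue (by real part) of an $N$-dimensional GEE configuration, and the same rescaling by $\sqrt{N/(N-1)}$ reduces the statement to a prefactor identity involving $K_N(\tau)/K_{N-1}(\tau)$ computed from \eqref{K}. The only immaterial difference is that you do the combinatorial bookkeeping with unordered densities (picking up the factor $k+1$) where the paper works directly with the ordered measures $\mu^{(N,k)}$ via $d\mu^{(N-1,k)}\,dt = d\mu^{(N,k+1)}$.
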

 
 Given these two results, the proof of {\sc Theorem} \ref{eq:aux} goes as follows. 
 
 \begin{proof}[Proof of Theorem \ref{eq:aux}]If we apply {\sc Lemma} \ref{uppingDim} to the function $$f(x) = \mathbbm{1}_{\sqrt{N}B}\left(\sqrt{\Phi'_1(1)}x \right)\exp \left(- \frac{1-b^2}{2(1+\tau)(b^2 + \tau)} x^2 \right)$$ then we can use the resulting equality to simplify the formula in {\sc Theorem} \ref{eq:KR} to recover the expression on the right of {\sc Theorem} \ref{eq:aux} and hence conclude the result. 
 
 \end{proof}

We relegate the proof {\sc Theorem} \ref{eq:KR} to the next section and finish the current subsection with a proof of {\sc Lemma} \ref{uppingDim}. 
 \begin{proof}[Proof of Lemma \ref{uppingDim}]
 	
 	We first remark that $\Delta(\lambda(X),t) = |\det(X - t I)| \Delta(\lambda(X))$. Next, note that the factor $i_m(X - t I) = 1$ if and only if we have the following inequality:
 	\begin{equation*}
 	\textnormal{Re } \lambda_1 > ... >  \textnormal{Re } \lambda_{m} >  t > ... > \textnormal{Re } \lambda_{N-1};
 	\end{equation*}
 	otherwise it is 0. These two remarks suggest that $t$ can fit in nicely as a (real) eigenvalue of a larger GEE matrix. If we restrict to the case of only $k$ real eigenvalues and if we relabel $t$ as $\lambda_{m}$ and $\lambda_{j} := \lambda_{j+1}$ for $j \geq m$ then we can rewrite $d \mu^{(N-1,k)} d t = d \mu^{(N,k+1)}$ and expand the left hand side of {\sc Lemma} \ref{uppingDim} as
 	
 	\begin{equation*}
 	\frac{K_N(\tau)}{K_{N-1}(\tau)}\sum_k  \int f(\lambda_m \sqrt{N-1}) \frac{| \Delta(\lambda)|}{K_{N}(\tau)}  \exp \left(-\frac{N-1}{1+\tau}\sum_{j=1}^N \frac{\lambda^2_j}{2} \right) \prod_{j=1}^N \sqrt{\textnormal{erfc} \left( \sqrt{\frac{2(N-1)}{1-\tau^2}} |\Ima \lambda_j| \right)} \mu^{(N,k+1)}(d \lambda)
 	\end{equation*}
 	where the integral is over the appropiate domain. The factor to the right of $f$ looks exactly like the density for $\Pa_{N,k+1}$ except with an implicit factor of $\mathbbm{1}_{\R}(\lambda_m)$ since we are mandating that $\lambda_m$ be real and the fact that we have $N-1$ instead of $N$ scattered in the density. We can remedy the latter issue by performing a substitution $\lambda := \sqrt{ \frac{N}{N-1}} \lambda$. Following the substitution, we obtain the left hand  side of {\sc Lemma} \ref{uppingDim} is equivalent to the following expression:
 	
 	\begin{equation*}
 	\frac{K_N(\tau)}{K_{N-1}(\tau)} \sqrt{ \frac{N}{N-1}}^{N + \binom{N}{2}}\E_{N}[\mathbbm{1}_{ \R}(\lambda_{m+1}(X)) f(\sqrt{N} \cdot \lambda_{m+1}(X))].
 	\end{equation*}
 	A simple algebra computation using (\ref{K}) reveals the leading constant is exactly as stated in the lemma.
 	
 \end{proof}
  \section{Proof of Theorem \ref{eq:KR} }
  
  The proof of this theorem will be broken up into a series of steps.
  
   Our first step is to invoke the traditional Kac-Rice formula. In order to do so, we will establish some notation. Given an equilibrium point, we choose coordinates in a neighborhood around $\sigma$ so that we can write $\sigma = 0$ and $F(0)$ as a random vector in $\R^{N-1}$.  We define $\rho_{F(\sigma)}$ to be the density function for the random vector $F(0)$. This depends on the choice of coordinates, but its value at 0 does not. Through the use of local coordinates, the classical Kac-Rice formula (see e.g. {\sc Theorem} 6.2 in Aza{\"i}s and Wschebor ~\cite{AW}) yields the following formula for $\E \mathcal{N}_m(B)$: 
\begin{equation} \label{KR}\E \mathcal{N}_{m}(B) = \int_{S^{N-1}(\sqrt{N})} \E[ |\det JF(\sigma)| i_m(JF(\sigma)) \mathbbm{1}_{B}(\lambda(\sigma)) | F(\sigma) = 0 ] \rho_{F(\sigma)}(0) d\sigma.
\end{equation}

The second step is to exploit the large symmetry group of the sphere, the orthonormal group $O(N)$, and its relationship with the integrand. It will allow us to reduce the integral in (\ref{KR}) to the integrand evaluated at the point $$\mathbf{n} = (0,...,0,\sqrt{N}) \in S^{N-1}(\sqrt{N}) \subset \R^{N}$$ times a factor of $\vol (S^{N-1}(\sqrt{N}))$. 

\begin{lemma} The function
$$\sigma \mapsto \E \left[|\det JF(\sigma)| \mathbbm{1}_{B}(\lambda(\sigma)) i_m(JF(\sigma)) | F(\sigma) = 0 \right] \rho_{F(\sigma)}(0)$$is invariant under the standard $O(N)$ action on $S^{N-1}(\sqrt{N})$ and hence is constant.
\end{lemma}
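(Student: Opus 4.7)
The plan is to exhibit a distributional $O(N)$-symmetry of the vector field $F$ and then transport every quantity in the integrand through it. For $O\in O(N)$, define the transformed field $\tilde F(x):=O^T F(Ox)$ on $S^{N-1}(\sqrt N)$, and set $\tilde f(x):=O^T f(Ox)$, $\tilde h:=O^T h$. Since $\langle Ox, f(Ox)+h\rangle=\langle x,O^T f(Ox)+O^T h\rangle$, one has $\tilde F(x)=-\tilde\lambda(x)x+\tilde f(x)+\tilde h$ with $\tilde\lambda(x)=\frac{1}{N}\langle x,\tilde f(x)+\tilde h\rangle$, exhibiting $\tilde F$ in the same functional form as $F$. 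First I would verify $(\tilde f,\tilde h)\stackrel{d}{=}(f,h)$. A direct covariance computation, using $\sum_k O_{ki}(Oy)_k=y_i$ and the analogous identity for $x_j$, gives
$$\E[\tilde f_i(x)\tilde f_j(y)]=\delta_{ij}\Phi_1\!\left(\tfrac{\langle x,y\rangle}{N}\right)+\tfrac{x_jy_i}{N}\Phi_2\!\left(\tfrac{\langle x,y\rangle}{N}\right),$$
matching $\E[f_i(x)f_j(y)]$; rotation invariance of the isotropic Gaussian covers $\tilde h$, and independence of $\tilde f,\tilde h$ is inherited from that of $f,h$. Hence $\tilde F\stackrel{d}{=}F$ as random vector fields.

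Next, fix $\sigma\in S^{N-1}(\sqrt N)$ and choose $O\in O(N)$ with $O\mathbf{n}=\sigma$. Select a local orthonormal frame for $T_{\mathbf{n}}S^{N-1}(\sqrt N)$ around $\mathbf n$ and transport it by $O$ to an orthonormal frame around $\sigma$, using the exponential maps at the two points as compatible local charts. In these matched charts, the Jacobians are related by conjugation, $J\tilde F(\mathbf n)=\tilde O^T JF(\sigma)\tilde O$, for some $\tilde O\in O(N-1)$ encoding the change of frame. Since eigenvalues and absolute determinants are conjugation-invariant, $i_m(J\tilde F(\mathbf n))=i_m(JF(\sigma))$ and $|\det J\tilde F(\mathbf n)|=|\det JF(\sigma)|$. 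The Lagrange multiplier is a scalar that satisfies $\tilde\lambda(\mathbf n)=\lambda(\sigma)$ directly, and $\rho_{\tilde F(\mathbf n)}(0)=\rho_{F(\sigma)}(0)$ because the two local trivializations of the tangent bundle differ by an orthogonal (unit-Jacobian) change of variables.

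Combining these identifications with $\tilde F\stackrel{d}{=}F$ yields
$$\E\!\left[|\det JF(\sigma)|\,\mathbbm{1}_B(\lambda(\sigma))\,i_m(JF(\sigma))\mid F(\sigma)=0\right]\rho_{F(\sigma)}(0)=\E\!\left[|\det JF(\mathbf n)|\,\mathbbm{1}_B(\lambda(\mathbf n))\,i_m(JF(\mathbf n))\mid F(\mathbf n)=0\right]\rho_{F(\mathbf n)}(0),$$
showing the function is constant on the $O(N)$-orbit of $\mathbf n$, which is all of $S^{N-1}(\sqrt N)$. The only delicate step is the bookkeeping for local coordinates: one has to set up charts at $\sigma$ and $\mathbf n$ that are exchanged exactly by $O$, so the induced map on tangent spaces is orthogonal and the Jacobian relation is a clean conjugation rather than a similarity by a general invertible matrix. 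Once orthonormal frames are chosen this is routine, and the rest of the argument reduces to the short covariance check above together with the invariance of eigenvalues, determinant, Lagrange multiplier, and Gaussian density under orthogonal transformations.
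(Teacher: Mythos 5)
Your proof is correct and follows essentially the same route as the paper: both arguments rest on the $O(N)$-equivariance in distribution of $(f,h)$, transported to $JF$, $\lambda$, and $\rho_{F(\sigma)}(0)$ through charts at $\sigma$ and $\mathbf{n}$ that are exchanged by the rotation, so that the Jacobians differ only by an orthogonal conjugation. The only cosmetic difference is that you verify the invariance of the covariance kernel of $f$ directly (where the paper cites Fyodorov's Eq.\ (3.17)) and phrase the symmetry at the level of the whole field $\tilde F = O^{T}F(O\,\cdot)$ rather than unpacking $\lambda$ and $\nabla\lambda$ term by term.
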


\begin{proof}
The key point is that the only probabilistic portion of $JF$ comes from $f$, $h$, and the partial derivatives of $f$ with respect to the ambient $\R^N$ variables. To see this, we define $j : S^{N-1}(\sqrt{N}) \rightarrow \R^N$ to be the usual embedding and for $x \in S^{N-1}(\sqrt{N}) \subset \R^N$. Define $\proj_{x} : \R^N \rightarrow T_x S^{N-1}(\sqrt{N})$ to be the standard projection. Then

\begin{equation*}
JF(x) =  \proj_x \circ J_{euc}F_x \circ dj_x
\end{equation*}
where $J_{euc}F|_x := J_{euc}F(x) := \left( \frac{\partial F_j}{dx_i}(x) \right)$ is the Jacobian of $F$ (viewed as a function from $\R^N$ to $\R^N$) at $x$ and $dj_x$ is the differential of $j$ at $x$. There is a simple relationship between the $O(N)$ action and the functions $\proj_x$ and $dj_x$. For any $g \in O(N)$, $g \, \proj_x \, g^{-1} = \proj_{gx}$ and $dj_{gx} = g \, dj_{x} \, g^{-1}$. We can use this to write $JF(gx)$ as follows: 
\begin{align*}
JF(gx) &=  \proj_{gx} \cdot J_{euc} F |_{gx} \cdot dj_{gx} \\
&= g \, \proj_{x} \left( g^{-1} \cdot J_{euc} F |_{gx} \cdot g \right) dj_{x} \, g^{-1}.
\end{align*}
We aim to prove that $(F(x),J_{euc}F |_x) = (g^TF(gx), g^{-1} \cdot J_{euc}F |_{gx} \cdot g)$ in distribution. The lemma will follow from this claim since conditioning on $F(gx) = 0$ is equivalent to $g^T F(gx) = 0$ and by orthogonality of $g$, $g^T = g^{-1}$. To obtain the required equality, we first write out $J_{euc}F|_x$ and $g^{-1} \cdot J_{euc}F|_{gx} \cdot g$ in terms of the ambient $\R^N$ coordinates:

\begin{align*}
\left( J_{euc}F|_x \right)_{ij} &= - \frac{\partial \lambda}{\partial x_j}(x) x_i - \lambda(x) \delta_{ij}+  \frac{\partial f_i}{\partial x_j}(x) \\
\left( g^{-1} \cdot J_{euc} F |_{gx} \cdot g \right)_{ij} &= -  (g^T \nabla \lambda(gx))_j x_i  - \lambda(gx) \delta_{ij} - \left( g^T \cdot J_{euc}f |_{gx} \cdot g \right)_{ij},
\end{align*}
where $\nabla \lambda$ is the gradient of $\lambda$. We can rewrite the expressions involving $\lambda$ in terms of $h$, $f$ and its derivatives as follows:

\begin{align*}
\lambda(gx) &= \frac{1}{N} \langle gx , f(gx) + h \rangle = \frac{1}{N} \langle x, g^T f(gx) + g^Th \rangle \\
\frac{\partial \lambda}{\partial x_j}(x) &= \frac{1}{N} \left(f_j(x) + h_j \right) + \langle x, j^{th} \textnormal{ column of } J_{euc}f |_{x}  \rangle \\
g^TF(gx) &= \lambda(gx)x + \frac{1}{N} \left( g^Tf(gx) + g^Th \right)\\
\left(g^T \nabla \lambda(gx) \right)_j &= \frac{1}{N} \left( g^Tf_{j}(gx) + g^Th_j \right) + \langle x, j^{th} \textnormal{ column of } g^T \cdot J_{euc} f |_{gx} \cdot g \rangle 
\end{align*}
From Equation (3.17) in Fyodorov~\cite{Fyo2}, we know that $\left(f(x), J_{euc}f |_x \right) = \left( g^Tf(gx), g^{T} \cdot J_{euc}f |_{gx} \cdot g \right)$ in distribution. Since $h$ is Gaussian, then $h = g^Th$ in distribution and thus by independence, we have $$\left(f(x), J_{euc}f |_x,h \right) = \left( g^Tf(gx), g^{T} \cdot J_{euc}f |_{gx} \cdot g , g^Th \right)$$ in distribution which implies what was desired. 
\end{proof}

The third step is to employ explicit coordinates around $\mathbf{n}$ to write down a formula for $JF$ and $F$. Let $B_{\sqrt{N}}$ denote the ball centered around $0 \in \R^{N-1}$ of radius $\sqrt{N}$ and define the map $P_N : B_{\sqrt{N}} \rightarrow S^{N-1}(\sqrt{N})$ by $$P_N(x_1,...,x_{N-1}) =  \left(x_1,...,x_{N-1}, \sqrt{N - |x|^2} \right)$$where $|x|^2 := \sum_{i=1}^{N-1} x^2_i$. With these coordinates, it is easy to compute formulas for $dj_{\n}$ and $\proj_{\n}$, yielding:
 
 \begin{equation*}
JF(\n) = \left( J_{euc}F_{ij} \right)_{i=1,j=1}^{N-1,N-1} = \left( \frac{\partial f_j}{\partial x_i} (\n) - \lambda(\n) \delta_{ij} \right)_{i=1,j=1}^{N-1,N-1}
 \end{equation*}
Note that the indices go up to $N-1$ and not up to $N$. Next, we compute $F(0)$ in coordinates as a vector in $\R^{N-1}$. We will compute it by establishing a choice of basis vectors for $T_{\n}S^{N-1}(\sqrt{N})$. Our basis vectors $\{ v_i \}$ will be the pushforward of the basis in $\R^{N-1}$ through our map $P_N$ i.e.,  $v_i = dP_N(e_i)$ where $\{ e_i \}$ is the standard basis vectors for $\R^{N-1}$. In this basis, we can write $F(0)$ as $$F(0) = \left( f_i(\mathbf{n}) + h_i \right)_{i=1}^{N-1}$$ and $\lambda(\n)$ as $$\lambda( \n) = \frac{f_N(\n) + h_N}{N}$$We also remark that conditioning on $F(\mathbf{n}) = 0 \in \R^N$ is the same as conditioning on $F(0) = 0 \in \R^{N-1}$. 

Our fourth step is to relate our random matrix integral to the Gaussian Elliptic Ensemble. To that end, we now make four assertions which we leave to the reader to verify:
 \begin{enumerate}
 	\item $\frac{\partial f_i}{\partial x_j}(\mathbf{n})$ is independent of $f_N(\mathbf{n})$ for $i,j \leq N-1$.
 	\item $F(0)$ is independent of $J F(\mathbf{n})$.
 	\item $f_N(\mathbf{n}) + h_N$ is a mean zero Gaussian with variance $\Phi_1(1) + \Phi_2(1) + \sigma^2$. 
 	\item For $1 \leq i ,j,n,m \leq N-1$, we have:$$\frac{N}{(N-1)\Phi'_1(1)}\E[\partial_j f_i(\mathbf{n}) \partial_{n} f_m(\mathbf{n})] = \frac{1}{N-1} \left( \delta_{in}\delta_{jm} + \frac{\Phi_2(1)}{\Phi'_1(1)} \delta_{im} \delta_{jn}\right).$$
 	\end{enumerate}
 	 Through the use of assertion (4) and the formula for $JF(\mathbf{n})$, we can write $JF(\mathbf{n})$ in terms of the Gaussian Elliptic Ensemble: $$\sqrt{\frac{N}{(N-1)\Phi'_1(1)}} JF(\mathbf{n}) = X - ZI$$
 	 in distribution, where $X$ has the law $\Pa_{N-1}$ and $Z$ is a Gaussian random variable independent of $X$ with mean 0 and variance given by $$\frac{\sigma^2  + \Phi_1(1) + \Phi_2(1)}{(N-1)\Phi'_1(1)} = \frac{b^2 + \tau}{N-1}.$$By the independence of $h$ from $f$, we know that $F(0)$ consists of $N-1$ independent mean zero Gaussian random variables with variance $\Phi_1(1) + \sigma^2$ and hence $$\rho_{F(\mathbf{n})}(0) =(2 \pi (\Phi_1(1) + \sigma^2))^{-(N-1)/2}.$$Summarizing all the steps we have taken, we can conclude the following expression for $\E \mathcal{N}_{m}(B)$: $$\E \mathcal{N}_{m}(B) = \E \left[| \det(X - ZI)|  \mathbbm{1}_{\sqrt{\frac{N}{N-1}}B} \left( \sqrt{\Phi'_1(1)} Z \right) \right] \frac{\vol(S^{N-1}(\sqrt{N}))}{\sqrt{2 \pi b^2}^{N-1}} \left( \frac{N-1}{N} \right)^{\frac{N-1}{2}}. $$Finally, using the formula for the volume of a sphere, $$\vol \left(S^{N-1}(\sqrt{N}) \right) = \frac{2 \pi^{N/2}}{\Gamma(N/2)}\sqrt{N}^{N-1}$$ and the explicit density function of $Z$, we obtain the expression on the right hand side of {\sc Theorem} \ref{eq:KR}. This completes the proof. 
\section{Proof of the main results}

 In this section, we prove the main results stated in {\sc Section} 1. {\sc Theorem} \ref{largedev} and {\sc Theorem} \ref{eq:aux}, coupled with Varadhan's lemma (see {\sc Theorem} 4.3.1 of Dembo and Zeitouni~\cite{DZ}) yields {\sc Theorem} \ref{lagmult}. Setting $c = -\infty$ and $d = \infty$ in {\sc Theorem} \ref{lagmult} results in {\sc Theorem} \ref{mainthm}. Finally, {\sc Theorem} \ref{divind} is a trivial corollary of the following lemma:

\begin{lemma}\label{divindex1}
Define $m(N)$ to be a sequence integers such that $\frac{m(N)}{N} \rightarrow \gamma \in (0,1)$ and let $\epsilon > 0$. Then, there exists a constant $c := c(\epsilon) > 0$ such that $$\Pa_N \left( \textnormal{Re } \lambda_{m(N)} \notin (s_{\gamma} - \epsilon, s_{\gamma} + \epsilon) \right) \leq \exp(-cN^2).$$
\end{lemma}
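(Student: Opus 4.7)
The plan is to reduce the tail event on $\mathrm{Re}\,\lambda_{m(N)}$ to a weak-topology deviation of the empirical measure $L_N = \frac{1}{N}\sum_{j=1}^N \delta_{\lambda_j(X)}$, and then invoke the speed-$N^2$ LDP recalled in item (a) preceding the proof of Theorem \ref{largedev}, whose good rate function $I$ vanishes only at $U_\tau$. The mechanism is purely soft: $\mathrm{Re}\,\lambda_{m(N)}$ being on the wrong side of $s_\gamma$ forces $L_N$ to lie in a weakly closed set that does not meet $U_\tau$, and hence has exponentially small probability at speed $N^2$.

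First, since $U_\tau$ is absolutely continuous on $E_\tau$ with strictly positive density and $s_\gamma \in (-1-\tau, 1+\tau)$, the quantities
\[
\delta_1 := \gamma - U_\tau(\{\mathrm{Re}\,z \geq s_\gamma + \epsilon\}), \qquad \delta_2 := U_\tau(\{\mathrm{Re}\,z > s_\gamma - \epsilon\}) - \gamma
\]
are both strictly positive. The hypothesis $m(N)/N \to \gamma$ implies that for all sufficiently large $N$ one has $m(N)/N > \gamma - \delta_1/2$ and $(m(N)-1)/N < \gamma + \delta_2/2$.

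Next I use the two straightforward counting inclusions
\begin{align*}
\{\mathrm{Re}\,\lambda_{m(N)} \geq s_\gamma + \epsilon\} &\subset \{L_N(\{\mathrm{Re}\,z \geq s_\gamma + \epsilon\}) \geq m(N)/N\} \subset \{L_N \in C_1\},\\
\{\mathrm{Re}\,\lambda_{m(N)} \leq s_\gamma - \epsilon\} &\subset \{L_N(\{\mathrm{Re}\,z > s_\gamma - \epsilon\}) \leq (m(N)-1)/N\} \subset \{L_N \in C_2\},
\end{align*}
where
\[
C_1 := \bigl\{\mu : \mu(\{\mathrm{Re}\,z \geq s_\gamma + \epsilon\}) \geq \gamma - \tfrac{\delta_1}{2}\bigr\}, \qquad C_2 := \bigl\{\mu : \mu(\{\mathrm{Re}\,z > s_\gamma - \epsilon\}) \leq \gamma + \tfrac{\delta_2}{2}\bigr\}.
\]
The set $C_1$ is weakly closed because $\mu \mapsto \mu(F)$ is upper semicontinuous for closed $F$, and $C_2$ is weakly closed because $\mu \mapsto \mu(G)$ is lower semicontinuous for open $G$. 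By the choice of $\delta_1,\delta_2$, neither $C_1$ nor $C_2$ contains $U_\tau$.

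Finally, the LDP upper bound for closed sets gives $\limsup_{N\to\infty} \frac{1}{N^2}\log \mathbb{P}_N(L_N \in C_i) \leq -\inf_{C_i} I$ for $i=1,2$. A standard compactness argument using the goodness of $I$ (any sequence $\mu_n \in C_i$ with $I(\mu_n)\to 0$ admits a weakly convergent subsequence whose limit lies in $C_i$ and minimizes $I$, forcing equality with $U_\tau$, contradicting $U_\tau \notin C_i$) yields $\inf_{C_i} I > 0$. Taking any $c$ strictly smaller than $\min(\inf_{C_1}I,\inf_{C_2}I)$ and applying a union bound proves the claim for large $N$; enlarging the implicit constant handles the finitely many remaining $N$. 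The one real obstacle is verifying that $C_1$ and $C_2$ are weakly closed and genuinely avoid $U_\tau$, which rests on the semicontinuity observation above together with the positivity of $\delta_1,\delta_2$.
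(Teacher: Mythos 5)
Your proof is correct and follows essentially the same route as the paper: convert the event on $\mathrm{Re}\,\lambda_{m(N)}$ into a deviation of the empirical measure $L_N$ and apply the speed-$N^2$ LDP whose rate function vanishes only at $U_\tau$. You simply supply the details the paper leaves implicit (weak closedness of the deviation sets, their avoidance of $U_\tau$, and positivity of the infimum of the good rate function), all of which check out.
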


\begin{proof}
This is an immediate consequence of the fact that $L_N$ satisfies a large deviation principle with speed $N^2$ whose rate function is minimized at  $U_{\tau}$. The proof of this LDP for the case $\tau = 0$ can be found in Ben Arous and Zeitouni~\cite{BGZ} but the same argument extends to $\tau \in (-1,1)$.	
	
We first break up the left hand side of the equality as follows: $$\Pa_N \left( \textnormal{Re }\lambda_{m(N)}(X) \notin (s_{\gamma} - \epsilon, s_{\gamma} + \epsilon) \right) = \Pa_N \left( \textnormal{Re } \lambda_{m(N)} < s_{\gamma} - \epsilon \right) + \Pa_N \left( \textnormal{Re }\lambda_{m(N)} > s_{\gamma} + \epsilon \right).$$
To estimate the first term, we let $L_N := L_N(X_N) := \frac{1}{N} \sum_{i=1}^{N} \delta_{\lambda_i(X_N)}$ be the empirical distribution of the eigenvalues of a matrix $X_N$ with law $\Pa_N$. Given the aforementioned LDP, we have:$$\Pa_N( \textnormal{Re }\lambda_{m(N)} > s_{\gamma} + \epsilon) = \Pa_N \left(L_N( z : \textnormal{Re } z > s_{\gamma} + \epsilon) \geq \frac{m(N)}{N} \right) \leq \frac{1}{2}\exp(-cN^2)$$ for some $c > 0$ since $U_{\tau}( z : \textnormal{Re } z > s_{\gamma} + \epsilon) < \gamma$. Similarly,$$\Pa_N(\textnormal{Re } \lambda_{m(N)} < s_{\gamma} - \epsilon) = \Pa_N \left(L_N(z : \textnormal{Re } z > s_{\gamma} - \epsilon) \leq \frac{m(N)-1}{N} \right) \leq \frac{1}{2}\exp(-cN^2).$$
\end{proof}

\end{document}